\documentclass[11pt]{article}
\usepackage[english]{babel}
\usepackage{amsmath, amsthm, amssymb, calrsfs, wasysym, verbatim, bbm, color, graphics, geometry}
\usepackage[pagewise]{lineno}
\newcommand\numberthis{\addtocounter{equation}{1}\tag{\theequation}}

\usepackage{titlesec}
\titlelabel{\thetitle.\quad}

\newcommand{\R}{\mathbb{R}}

\newcommand{\rmd}{\mathrm{d}}

\newtheorem{theorem}{Theorem}
\newtheorem{proposition}{Proposition}
\newtheorem{defn}{Definition}

\newtheorem{rem}{Remark}
\newtheorem{lemma}{Lemma}
\newtheorem{corollary}{Corollary}
\newtheorem{claim}{Claim}
\newtheorem{example}{Example}

\usepackage{hyperref}
\usepackage{cleveref}
\usepackage{xcolor}
\usepackage{verbatim}
\usepackage{hyperref}
\hypersetup{linkcolor={blue},citecolor={blue},urlcolor={red}}

\title{Strong maximum principle for fully nonlinear nonlocal problems}
\author{Juan Pablo Cabeza, Gabrielle Nornberg, Disson dos Prazeres }
\date{\today} 

\author{
Juan Pablo Cabeza\\
\small Departamento de Ingeniería Matemática, Universidad de Chile\\
\small Beauchef 851, Santiago, Chile\\
\small\texttt{jcabeza@dim.uchile.cl}
\and
Gabrielle Nornberg\\
\small Departamento de Ingeniería Matemática, Universidad de Chile\\
\small Beauchef 851, Santiago, Chile\\
\small \texttt{gnornberg@dim.uchile.cl
}
\and
Disson dos Prazeres\\
\small Departamento de Matemática, Universidade Federal de Sergipe\\
\small Av. Marechal Rondon s/n, Jd. Rosa Elze, São Cristóvão-SE, Brazil.\\
\small\texttt{disson@mat.ufs.br}
}

\begin{document}
\maketitle

\begin{abstract}

In this paper, we study solvability and qualitative properties of nonnegative solutions for a sublinear nonlocal problem with fully nonlinear structure in the form 
$$
  \mathcal{M}^{\pm}[u]+a(x)u^{q}(x)=0 \; \text{ in }\Omega,\qquad u\geq 0 \; \text{ in }\Omega. 
$$ 
Here $\Omega \subset \mathbb{R}^n$ is a bounded $C^{1,1}$ convex domain, $\mathcal{M}^{ \pm}$ stands for nonlocal Pucci extremal operators defined in a class $\mathcal{L}_*$ of homogeneous kernels, $q\in(0,1)$, and $a$ is a possibly sign-changing weight.

We introduce a new nonlocal hypothesis on the negative part of the solution outside the domain, which together with the negative part of the potential, influences the formation of dead cores and cannot be removed. 
Our approach relies on uniform bounds from below of the maximum of nontrivial solutions through Liouville theorems, and on a Hopf lemma for viscosity solutions driven by fully nonlinear operators, which we also prove.

\end{abstract}


\section{Introduction}

The strong maximum principle (SMP) is a fundamental tool in the study of elliptic PDEs. It states that a nonnegative supersolution in a domain cannot vanish within that domain, unless it vanishes identically, see~\cite{gilbarg1977elliptic}.
The nonvality of SMP is related to the the study of dead core solutions, that is, nontrivial solutions that vanish on a nonempty subset of the domain. The latter has been a topic of increasing interest of elliptic PDEs with application to the theory of chemical-biological processes, combustion dynamics, and related fields.
We quote the existence and nonexistence of nonnegative dead core solutions for fully nonlinear models \cite{da2022non}, fully coupled nonlinear systems \cite{araujo2024fully}, and improved regularity results for solutions to a dead core problem in \cite{ SS2018,dos2025improved, T2016}.
Dead cores for equations involving nonlocal operators present additional difficulties arising from the inherent nonlocality of the fractional operator, and will be the central role of this work.


In this paper we analyze existence, nonexistence, and qualitative properties of solutions that satisfy either a strong maximum principle or a dead-core configuration, for integro-differential equations in the form
\begin{equation}\label{eq:puccinonlocal}
    \mathcal{M}^{\pm}[u]+a(x)u^{q}(x)=0 \; \text{ in }\Omega,\qquad u\geq 0 \; \text{ in }\Omega.
\end{equation}
Here, \(\Omega\) is a bounded $C^{1,1} $ convex domain of \(\mathbb{R}^{n}\), \(a\in C(\overline{\Omega})\) is a possibly sign-changing weight, 
\(q\in(0,1)\), and \(\mathcal{M}^{\pm}\) are Pucci extremal operators given by
\begin{equation}\label{definitionpucci}
   \mathcal{M}^{+}[u](x) :=\sup_{L\in\mathcal{L}_{*}}Lu(x) \qquad\text{ and }\qquad \mathcal{M}^{-}[u](x) :=\inf_{L\in\mathcal{L}_{*}}Lu(x)
\end{equation}
defined in the class $\mathcal{L}_{*}$ consisting of the following linear operators 
\[Lu(x)=
\textstyle 
C_{n,s}\int_{\mathbb{R}^n}( u(x+y)+u(x-y)-2u(x) )\mu\left(\frac{y}{|y|}\right)\frac{\mathrm{d}y}{|y|^{n+2s}},\] with $\mu : \mathbb{S}^{n-1} \to [\lambda,\Lambda]$ satisfying $\mu(\theta)=\mu(-\theta)$.

We aim to investigate whether nontrivial solutions of our nonlocal problem \eqref{eq:puccinonlocal}, for which we establish sufficient conditions for existence (see Theorem~\ref{TheoremExistence} in Section 5), satisfy the SMP within this nonlocal framework.
More precisely, our goal is to study how the presence of the negative part of the solution outside the domain, and the negative part of the potential $a$, may influence the appearance of dead cores. 
We next state our main theorems on the nonexistence of dead cores. To this matter, we define the set \[\mathcal{P}^{0}(\Omega):=\{u\in C_{0}^{s}(\overline{\Omega}):u>0 \text{ in }\Omega, \;\partial^{s}_{\nu}u<0 \text{ on }\partial\Omega\}\]
and the norm
\begin{equation}\label{norm}
        \|u\|_{L^{1}_{s}(\mathbb{R}^{n})}=\int_{\mathbb{R}^{n}}\frac{|u(x)|}{1+|x|^{n+2s}}\mathrm{d}x .
    \end{equation}
   
\begin{theorem}\label{teo1} Let $\Omega$ be a bounded $C^{1,1}$ convex domain and $q\in (0,1)$.
Let \(a\in C(\overline{\Omega})\) such that $a^+\not\equiv 0$.  Then there exists \(\delta > 0\) so that if 
    $$
    \|u^{-}\|_{L^{1}_{s}(\mathbb{R}^{n})} < \delta,
    $$
    and
$$
    |u(x)|\leq C\sup_\Omega u \,(1+|x|)^{1+\alpha} \mbox{ in }\mathbb{R}^n \textrm{ for some $\alpha$}, 
$$  
    then every nontrivial solution of \eqref{eq:puccinonlocal} such that $u \leq 0 $ in $ \mathbb{R}^{n}\backslash\Omega$ 
    belongs to \(\mathcal{P}^{0}(\Omega)\).
\end{theorem}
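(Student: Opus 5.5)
The plan is to argue by contradiction combined with a blow-up/rescaling argument, in the spirit of the abstract: first obtain a uniform lower bound on $\sup_\Omega u$ for nontrivial solutions, then show that when $u^-$ is small in $L^1_s$ the nonlocal tail cannot create a dead core, and finally apply the Hopf lemma for viscosity solutions.

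\textbf{Step 1: a uniform lower bound on the maximum.} We show there is $c_0>0$, depending only on $n,s,\lambda,\Lambda,q,\Omega,a$, such that every nontrivial solution satisfies $M:=\sup_\Omega u\ge c_0$, uniformly as long as $\|u^-\|_{L^1_s}$ stays bounded (say $\le 1$). Suppose instead $M_k\to 0$ along a sequence $u_k$. Set $v_k=u_k/M_k$. Then
$$\mathcal{M}^\pm[v_k]+M_k^{q-1}a(x)v_k^q=0,$$
and $M_k^{q-1}\to\infty$ because $q<1$. Rescale space around a maximum point $x_k$ by $r_k=M_k^{(1-q)/(2s)}$, setting $w_k(y)=v_k(x_k+r_ky)$. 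Since the class $\mathcal L_*$ consists of homogeneous kernels, $w_k$ solves an equation with the same Pucci operator and bounded coefficient $a(x_k+r_ky)$. The growth hypothesis $|u|\le C\sup_\Omega u\,(1+|x|)^{1+\alpha}$ makes $|w_k|$ grow at most like $|y|^{1+\alpha}$ uniformly; with $1+\alpha<2s$ this is exactly the integrability needed for the tails to converge. Combining this with H\"older estimates for Pucci-type equations, we pass to the limit and obtain either an entire solution on $\R^n$ or a solution on a half-space (when $x_k$ approaches $\partial\Omega$, using $u\le0$ outside). The limit has $w(0)=1$ and satisfies $\mathcal M^\pm[w]+a(x_0)w^q=0$. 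The Liouville theorems, as the abstract suggests, should then rule this out. The key point to check is the case $a(x_0)\le0$: there $w$ is a bounded-growth subsolution of a Pucci equation with maximum $1$ attained at $0$. Testing at the maximum gives $\mathcal M^\pm[w](0)\le 0$, with strict inequality unless $w$ is constant, and a constant is incompatible with $w\le0$ outside a half-space or with the growth condition. If this approach is hard to carry out, a fallback is a direct comparison with the principal eigenfunction on a ball where $a^+>0$, which gives $M\ge c_0$ because of sublinearity.

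\textbf{Step 2: positivity inside $\Omega$.} Suppose $u(x_0)=0$ for some $x_0\in\Omega$ while $u\not\equiv0$. At $x_0$ the function $u$ attains its minimum over $\Omega$, so it can be touched from below by the constant $0$ in $\Omega$. The viscosity inequality then gives
$$\mathcal M^\pm[u](x_0)\le -a(x_0)\cdot 0=0.$$
On the other hand, with $\mu\ge\lambda$,
$$\mathcal M^-[u](x_0)\ge \lambda C_{n,s}\int \frac{u^+(x_0+y)+u^+(x_0-y)}{|y|^{n+2s}}\,dy-\Lambda C_{n,s}\int\frac{u^-(x_0+y)+u^-(x_0-y)}{|y|^{n+2s}}\,dy.$$
The first term is bounded below by $c_1 M$ times a constant, using Step 1 together with uniform H\"older continuity: $u\ge M/2$ on a ball of fixed radius around the maximum point. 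The second term is bounded by $C\|u^-\|_{L^1_s}<C\delta$, because $u^-$ vanishes in $\Omega$ and so its support stays at positive distance from $x_0$. There is one delicate point: the distance from $x_0$ to $\partial\Omega$ is not uniform. I would handle this with a quantitative interior estimate, a weak Harnack inequality for supersolutions $\mathcal M^-[u]\le C\|u^-\|$. This bounds $\inf_{B_r}u$ from below by an average of $u$ minus $C\delta$, so a positive infimum near the maximum spreads along chains of balls. Convexity of $\Omega$ gives a uniform interior cone/ball chain condition, which makes this propagation uniform. With $\delta$ small enough the right-hand side is strictly positive, giving a contradiction.

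\textbf{Step 3: the boundary behaviour.} Finally, the Hopf lemma for fully nonlinear nonlocal viscosity solutions (proved in the paper) applies, since $\mathcal M^-[u]\le a^-u^q$ and $u>0$ in $\Omega$. It gives $u\ge c\,d^s$ near $\partial\Omega$, hence $\partial_\nu^s u<0$ on $\partial\Omega$. Because $u$ is $C^s$ up to the boundary, we conclude $u\in\mathcal P^0(\Omega)$.

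I expect the main obstacle to be Step 2, specifically making $\delta$ independent of $u$ near $\partial\Omega$. The sign-changing term $a^-u^q$ is sublinear and can by itself generate dead cores; this is exactly why Step 1's lower bound, which makes the positive mass dominate both $a^-u^q$ and the tail $u^-$, is essential.
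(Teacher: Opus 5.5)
\textbf{Your proposal has a genuine gap at the boundary.} It affects the part of Step 2 that handles zeros close to $\partial\Omega$, and all of Step 3.

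\emph{Why the tail term is not uniform.} Your interior argument (test with $0$ at a zero, split $u=u^+-u^-$) works at zeros a fixed distance from $\partial\Omega$. Near $\partial\Omega$ you need a bound on the exterior tail $\int_{\mathbb{R}^n\setminus\Omega}u^-(z)|z-x|^{-n-2s}\,dz$ that is uniform as $x\to\partial\Omega$. The hypotheses only give $\|u^-\|_{L^1_s}<\delta$. That yields a bound of order $\delta\,\mathrm{dist}(x,\mathrm{supp}\,u^-)^{-n-2s}$, and $\mathrm{supp}\,u^-\subset\mathbb{R}^n\setminus\Omega$ may reach $\partial\Omega$.

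\emph{The weak Harnack chain does not fix this.} Apply it to $u^+$, with right-hand side $a^-u^q+\mathcal{M}^+[u^-]$, on a ball of radius $r$ comparable to the distance to $\partial\Omega$. The error term $r^{2s}\|\mathcal{M}^+[u^-]\|_{L^\infty}$ can be as large as $\delta r^{-n}$. So positivity cannot be propagated up to $\partial\Omega$ with a $\delta$ independent of $u$.

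\emph{Theorem~\ref{thm1} does not apply to $u$ itself.} It needs three things you do not have:
\begin{itemize}
\item $u\ge0$ in a full ball $B_R(x_0)$ around the boundary point, i.e.\ $u^-\equiv0$ in $B_R(x_0)\setminus\Omega$, which is not assumed;
\item the smallness condition \eqref{inequalityhopf}, whose constant $\widetilde C$ is of order $R^{-n-2s}$;
\item a potential with $V^-\in L^\infty$, whereas $\mathcal{M}^-[u]\le a^-u^q$ corresponds to $V^-=a^-u^{q-1}$, which is unbounded where $u\to0$.
\end{itemize}

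\emph{How the paper proceeds.} It never applies Hopf to $u$. It argues by contradiction along a sequence $u_k\notin\mathcal{P}^0(\Omega)$ with $\|u_k^-\|_{L^1_s}\to0$. Using Lemma~\ref{Lemma 1 sublinear} and compactness (normalizing first if $\|u_k\|_\infty\to\infty$), it gets a nontrivial limit $u_0\ge0$ on all of $\mathbb{R}^n$. It applies Proposition~\ref{SMP} and Theorem~\ref{thm1} to $u_0$, for which the sign and smallness hypotheses hold trivially. It then returns to $u_k$ via the uniform convergence $u_k/d^s\to u_0/d^s$ on $\overline\Omega$, taken from boundary regularity.

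\textbf{The obstacle is structural, and the paper's last step needs the same boundary control you are missing.} Consider the linear model $(-\Delta)^s u=1$ in $B_1$, $u=-t\eta_h$ in $\mathbb{R}^n\setminus B_1$, where $\eta_h\ge0$ has unit mass and is supported in $B_{h/2}((1+h)e_1)$. The Poisson kernel of the ball gives
\[
u(x)/(1-|x|^2)^s=\kappa-t\,c\int\eta_h(z)(|z|^2-1)^{-s}|x-z|^{-n}\,dz,
\]
with $\kappa>0$ the torsion constant. This is continuous on $\overline{B_1}$, and its value at $e_1$ is $\kappa-t\,\theta_h$ with $\theta_h\simeq h^{-n-s}$. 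Let $t_*$ be the first value of $t$ at which its minimum over $\overline{B_1}$ vanishes. Then:
\begin{itemize}
\item $t_*\lesssim h^{n+s}$;
\item $u\ge0$ in $B_1$ and $u\le0$ outside;
\item $u\notin\mathcal{P}^0(B_1)$;
\item $\|u^-\|_{L^1_s}\lesssim h^{n+s}\to0$.
\end{itemize}
In this model, $L^1_s$-smallness of $u^-$ alone does not control $u/d^s$ on the boundary, and the same mechanism defeats the uniform convergence $u_k/d^s\to u_0/d^s$ that the paper invokes. What is really needed, and what Theorem~\ref{thm1} presupposes, is control of $u^-$ near $\partial\Omega$, e.g.\ $u^-\equiv0$ in a fixed neighbourhood of $\partial\Omega$. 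That would make the tail term in your Step 2 uniformly small.

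\textbf{A smaller issue in Step 1.} In the whole-space blow-up with $a(x_0)=0$, the constant $w\equiv1$ solves the limit equation and satisfies the growth bound. Your case analysis therefore does not exclude it. You need a lower bound on $a$ at the maximum points that survives the rescaling.
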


Note that the only way to have dead core solutions is by assuming that $u$ has a nontrivial negative part. In fact, note that if \(u\geq 0\) in \(\mathbb{R}^{n}\) is a nontrivial solution, even in the case that \(a\) changes sign, then \(u>0\) in \(\Omega\), as in the proof of the standard SMP for nonlocal operators (if  $u$ vanishes in $\Omega$ is a $C^{1,1}$ function, then there exists an interior minimum point \(x_{0}\in\Omega\) such that \(u(x_0)=\inf_{\Omega}u=0\), so for $\mathcal{M}^-$ we have
\begin{center}$
   0 = \mathcal{M}^{-}[u](x_0) = \inf_{L\in\mathcal{L}}Lu(x_{0})  
  \ge 2 \lambda  C_{n,s}\text{P.V. }\int_{\mathbb{R}^{n}}\frac{u(y)}{|y-x_0|^{n+2s}}\mathrm{d}y >0,
$\end{center}
which is impossible). 

Furthermore, when the weight \(a\) has a sign, it is not necessary to impose a sign condition on \(\mathbb{R}^{n}\backslash\Omega\), even though a smallness condition in the weighted \(L_{s}^{1}-\)norm of $u^-$ is still required.

\begin{theorem}\label{teo2}
Let $\Omega$ be a bounded $C^{1,1}$ convex domain and $q\in (0,1)$.
Let \(a\in C(\overline{\Omega})\) such that $a> 0$ in $\Omega$.  Then there exists  \(\delta > 0\) so that if 
if 
    $$
    \|u^{-}\|_{L^{1}_{s}(\mathbb{R}^{n})} < \delta,
    $$
    and
$$
    |u(x)|\leq C\sup_\Omega u \,(1+|x|)^{1+\alpha} \mbox{ in }\mathbb{R}^n \textrm{ for some $\alpha$}, 
$$ 
    then every nontrivial solution of \eqref{eq:puccinonlocal} belongs to \(\mathcal{P}^{0}(\Omega)\).
\end{theorem}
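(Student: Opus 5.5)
Theorem~\ref{teo2} follows the same scheme as Theorem~\ref{teo1}. Sign information previously provided by $u\le 0$ outside $\Omega$ now comes from $a>0$. The argument rests on two ingredients announced in the abstract. The first is a uniform lower bound for $\sup_\Omega u$ over nontrivial solutions, obtained by blow-up and the Liouville theorems. The second is a Hopf lemma for viscosity supersolutions of Pucci-type equations. We then argue by contradiction on the smallness of $\|u^-\|_{L^1_s(\mathbb{R}^n)}$.

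\textbf{Step 1 (lower bound on the maximum).} We first show that there is $c_0>0$, depending only on $n,s,\lambda,\Lambda,q,a,\Omega$, such that every nontrivial solution with $\|u^-\|_{L^1_s}\le 1$ satisfies $M:=\sup_\Omega u\ge c_0$. Suppose instead that $M_k\to 0$ for a sequence of solutions $u_k$. Rescale by $v_k(x)=u_k(x_k+r_kx)/M_k$, where $x_k$ is a maximum point and $r_k=M_k^{(1-q)/(2s)}\to0$. This scaling is chosen so that
\[
\mathcal{M}^\pm[v_k]+a(x_k+r_kx)v_k^q=0,
\]
since $r_k^{2s}M_k^{q-1}=1$. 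The growth hypothesis $|u|\le C M(1+|x|)^{1+\alpha}$ gives $|v_k|\le C(1+|y|)^{1+\alpha}$ with $1+\alpha<2s$, which makes the tails uniformly controlled. The interior $C^{\gamma}$ estimates for Pucci operators then give compactness. In the limit we obtain $v\ge0$ with $v(0)=1$, solving either
\[
\mathcal{M}^\pm[v]+a(x_\infty)v^q=0
\]
in $\mathbb{R}^n$ or in a half-space, according to whether $\mathrm{dist}(x_k,\partial\Omega)/r_k$ diverges or stays bounded. In the half-space case we also use the boundary $C^s$ estimates, so that $v=0$ on the boundary. Since $a(x_\infty)>0$, $v$ is a bounded-growth nonnegative supersolution of $\mathcal{M}^\pm[v]\le -c\,v^q$, and the Liouville theorem assumed earlier for sublinear supersolutions forces $v\equiv0$, contradicting $v(0)=1$. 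Here $a>0$ on $\overline\Omega$ matters: where $a$ vanishes or is negative, the limiting equation loses this structure. If $a$ only satisfies $a>0$ in $\Omega$, I would split into interior and boundary cases and use $x_\infty\in\Omega$ whenever the blow-up point stays interior.

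\textbf{Step 2 (SMP and Hopf via negative-part perturbation).} Next we write $u=u^+-u^-$. In $\Omega$ we have $u\ge0$, so $u^-$ is supported in $\mathbb{R}^n\setminus\Omega$. For $x\in\Omega$ the operator therefore splits as
\[
\mathcal{M}^-[u](x)\ge \mathcal{M}^-[u^+](x)-2\Lambda C_{n,s}\int_{\mathbb{R}^n\setminus\Omega}\frac{u^-(y)}{|x-y|^{n+2s}}\,\mathrm{d}y .
\]
The subtracted integral is at most $C_\Omega\|u^-\|_{L^1_s}\le C_\Omega\delta$ on compact subsets of $\Omega$, and near $\partial\Omega$ it is handled with the barrier. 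Since $a>0$ and $u\ge0$ in $\Omega$, we get $\mathcal{M}^-[u^+]\le C\delta$ in $\Omega$, and $\mathcal{M}^-[u]\le0$ wherever $u$ is positive. For interior positivity, suppose $u(x_0)=0$ at some $x_0\in\Omega$. Evaluating the equation there (or testing in the viscosity sense with a smooth function touching from below) gives
\[
0\ge \mathcal{M}^-[u](x_0)\ge 2\lambda C_{n,s}\int\frac{u^+(y)}{|y-x_0|^{n+2s}}\,\mathrm{d}y-2\Lambda C_{n,s}\int\frac{u^-(y)}{|y-x_0|^{n+2s}}\,\mathrm{d}y .
\]
Step~1 provides a ball of radius $\rho$, with $\rho$ controlled by the interior H\"older estimate, on which $u^+\ge c_0/2$. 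This makes the first integral at least a fixed constant $c_1$. The second integral is at most $C\delta$, with $C$ uniform as long as $x_0$ stays a fixed distance from $\partial\Omega$. Choosing $\delta$ small gives a contradiction in that region.

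\textbf{Step 3 (boundary layer, the main obstacle).} Near $\partial\Omega$ we use the Hopf lemma for viscosity solutions, which states that a nonnegative supersolution of $\mathcal{M}^-[w]\le f$ with a small right-hand side and $w\ge c$ on an interior set satisfies $w\ge c\,d^s$. We apply it to $u^+$, using convexity and $C^{1,1}$ regularity to fit a uniform interior ball with a barrier $(r^2-|x-z|^2)_+^s$ at each boundary point. The difficulty is that the negative-part term blows up like $d(x)^{-2s}$ near the boundary, so it is not uniformly small there. I would absorb it by putting a correction into the barrier: take
\[
\varphi=\varepsilon\bigl(r^2-|x-z|^2\bigr)_+^s-\eta\,\chi_{E},
\]
with $\chi_E$ an outer annulus. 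Comparison of $u$ with $\varphi$ in the annulus then yields $u\ge c\,d^s$. This gives $u>0$ in $\Omega$ and $\partial_\nu^s u<0$ on $\partial\Omega$, that is, $u\in\mathcal{P}^0(\Omega)$. Since all constants depend only on $c_0$ and the data, $\delta$ is uniform.
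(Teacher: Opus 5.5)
Your route differs from the paper's: you argue directly and quantitatively, while the paper uses compactness. The argument breaks at Step~3, exactly where you located the difficulty. The correction $-\eta\chi_E$ does not absorb the bad term; it \emph{is} the bad term.

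To compare on $A=B_r(z)\setminus\overline{B_{r/2}(z)}$ you need $\varphi\le u$ on $\mathbb{R}^n\setminus A$. Hence $E\supset\{u<0\}$ and $\eta\ge\sup u^-$. Suppose, say, $u^-\ge\eta_0>0$ on $B_\rho(x_0)\setminus\Omega$ near the tangency point $x_0$; this is compatible with $\|u^-\|_{L^1_s}<\delta$ for $\rho$ small, and with the growth hypothesis. Then $E$ contains this set, which by convexity contains the exterior half-ball at $x_0$. For $x\in A$ on the inner normal through $x_0$,
\[
\mathcal{M}^-[\varphi](x)\le \mathcal{M}^-\bigl[\varepsilon(r^2-|\cdot-z|^2)_+^s\bigr](x)-2\lambda C_{n,s}\,\eta\int_E\frac{\mathrm{d}y}{|x-y|^{n+2s}}\le -c\,\eta\,d(x)^{-2s},
\]
because the ball profile already has $\mathcal{M}^-\le 0$ in $B_r(z)$. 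So $\varphi$ is not a subsolution near $x_0$. No barrier comparable to $d^s$ can compensate a term of order $d^{-2s}$; the one of Lemma~\ref{supersolucion} gains only at a rate like $d^{-s/2}$. Moving $E$ away from $x_0$ instead destroys $\varphi\le u$ outside $A$.

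This is not an artifact of your barrier. Already for $-(-\Delta)^s$ in $B_1$, an exterior bump $u^-=t\chi_{B_\eta((1+2\eta)x_0)}$ costs only $\approx t\eta^n$ in $L^1_s$. By the Poisson kernel, its $s$-harmonic extension is $\gtrsim t\,\eta^{-s}d^s$ on $B_\eta(x_0)\cap B_1$. By comparison with the zero-data solution, it therefore pushes $u$ down near $x_0$ by $\gtrsim t\,\eta^{-s}d^s$. This is comparable to the whole Hopf constant once $t\approx\eta^s$, while $\|u^-\|_{L^1_s}=O(\eta^{n+s})$. The missing ingredient is therefore control of $u^-$ in a fixed neighbourhood of $\partial\Omega$, not a better barrier. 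An example is $u\ge0$ in $B_R(x_0)$: this is precisely the hypothesis of Theorem~\ref{thm1} that lets Proposition~\ref{lema2} give the $r$-independent bound $\widetilde C\|u^-\|_{L^1_s}$.

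For comparison, the paper never runs a barrier on $u$ itself. It argues by contradiction along $u_k\notin\mathcal{P}^0(\Omega)$ with $\|u_k^-\|_{L^1_s}\to0$. It uses Lemma~\ref{Lemma 1 sublinear} to obtain a nontrivial limit $u_0\ge0$ in $\mathbb{R}^n$, and applies Proposition~\ref{SMP} and Theorem~\ref{thm1} to $u_0$, where \eqref{inequalityhopf} is automatic because $u_0^-\equiv0$. It then transfers positivity and the Hopf bound back to $u_k$ through $u_k/d^s\to u_0/d^s$, citing the boundary regularity of Ros-Oton and Serra. Your Steps~1--2 are a quantitative counterpart of the first part of this.

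Be aware that the transfer step hides the same issue. That boundary estimate concerns functions vanishing outside $\Omega$, and applying it to $u_k$ amounts to treating $\int_{\mathbb{R}^n\setminus\Omega}u_k^-(y)|x-y|^{-n-2s}\,\mathrm{d}y$ as a bounded right-hand side near $\partial\Omega$.

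Finally, your Step~1 is incomplete in one case: $x_k\to x_\infty\in\partial\Omega$ with $a(x_\infty)=0$ (allowed, since $a>0$ only in $\Omega$) and $d(x_k)/r_k\to\infty$. The limit then solves $\mathcal{M}^\pm[v]=0$ in $\mathbb{R}^n$ with $0\le v\le v(0)=1$, and $v\equiv1$ yields no contradiction.
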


Observe that in the local setting, the above result is trivial when \(a\) is positive; however, even in this particular case, the nonlocal SMP may still be violated, as shows our Example \ref{counterexample} in the final section.

The proofs and hypotheses of our theorems rely fundamentally on certain properties which in nature differ from the local structure. 
In particular, one of our key tools ensures that the maxima of nontrivial solutions stay uniformly bounded below through Liouville theorems. 
We also establish a Hopf lemma for Pucci extremal operators that extends the one in \cite{dipierro2024fractional} (see also \cite{DelTP2025, OS2025}) under slightly different assumptions, which is of independent interest. 
As a dichotomy, we also derive regularity results for the solutions of extremal operators that do not depend on any prescribed growth condition. 

The rest of the paper is organized as follows: Section 2 presents auxiliary results and notation. In Section~3 we prove a Hopf lemma and a strong maximum principle for our class of fully nonlinear equations. Section 4 explores the applications of this lemma to our sublinear equations  involving fully nonlinear nonlocal operators. Finally, in Section 5 we give sufficient conditions for existence of nontrivial solutions to our problem, by also presenting the necessity of our hypothesis towards the violation of the strong maximum principle.


\section{Auxiliary tools}\label{auxiliary}

We start the section by recalling the definition of viscosity solution, as in~\cite{fernandez2024integro}.
Here, \(u\in L_{s}^{1}(\mathbb{R}^{n})\) if $  \|u\|_{L^{1}_{s}(\R^n)}<\infty$, where $\|u\|_{L^{1}_{s}(\R^n)}$ is given by \eqref{norm}.
\begin{defn}\label{def-visc}
    Let \(s\in(0,1)\), \(\Omega\subset\mathbb{R}^{n}\) be an open set, and let \(f\in C(\Omega)\). Consider the equation
    \begin{equation}\label{viscosityPucci}
        \mathcal{M}^{\pm}[u](x) = f(x)
    \end{equation}
    \begin{itemize}
        \item Let \(u\in L_{s}^{1}(\mathbb{R}^{n})\) be an upper semi continuous function, we say \(u\) is a viscosity subsolution to~\eqref{viscosityPucci} in \(\Omega\) and we denote \(\mathcal{M}^{\pm}[u](x)\geq f(x)\) in \(\Omega\) , if for any \(x\in\Omega\) and any neighborhood \(N_{x}\subset\Omega\) of \(x\) in \(\Omega\), and for any test function \(\phi\in L^{1}_{s}(\mathbb{R}^{n})\) such that \(\phi\in C^{2}(N_x)\), \(\phi(x) = u(x)\), and \(\phi\geq u\) in all of \(\mathbb{R}^{n}\), we have \(\mathcal{M}^{\pm}[\phi](x)\geq f(x)\).
        \item Let \(u\in L_{s}^{1}(\mathbb{R}^{n})\) be a lower semi continuous function, we say \(u\) is a viscosity supersolution to~\eqref{viscosityPucci} in \(\Omega\) and we denote \(\mathcal{M}^{\pm}[u](x)\leq f(x)\), if for any \(x\in\Omega\) and any neighborhood \(N_{x}\subset\Omega\) of \(x\) in \(\Omega\), and for any test function \(\phi\in L^{1}_{s}(\mathbb{R}^{n})\) such that \(\phi\in C^{2}(N_x)\), \(\phi(x) = u(x)\), and \(\phi\leq u\) in all of \(\mathbb{R}^{n}\), we have \(\mathcal{M}^{\pm}[\phi](x)\leq f(x)\). 
        \item We say that \(u\in C(\Omega)\cap L_{s}^{1}(\mathbb{R}^{n})\) is a viscosity solution to~\eqref{viscosityPucci} in \(\Omega\), and we denote \(\mathcal{M}^{\pm}[u](x)=f(x)\) in \(\Omega\) if it is both a viscosity subsolution and supersolution.
    \end{itemize}
\end{defn}

Next we recall a barrier result established in \cite{ros2016boundary} for the class $\mathcal{L}_{*}$, which will be employed onward in the text. 
\begin{lemma}\label{rho}
    Let \(s_0\in(0,1)\), \(s\in[s_0,1)\), and define
\begin{align*}
    \rho_{1}(x) = (\mathrm{dist}(x, \mathbb{R}^{n}\backslash B_{1}))^{s},  
    \quad
    \rho_{2}(x) = (\mathrm{dist}(x, \mathbb{R}^{n}\backslash B_{1}))^{3s/2}. 
\end{align*} Then, for all \(x\in B_{1}\backslash B_{1/2}\) we have
    \begin{align*}
        0  \geq \mathcal{M}^{-}[\rho_{1}](x) &\geq -C\{1+(1-s)|\log(1-|x|)|\}, \\
        \mathcal{M}^{-}[\rho_{2}](x) &\geq c(1-|x|)^{-s/2} - C,
    \end{align*}
    where the constants \(c>0\) and \(C\) depend only on \(n\), \(s_0\) and the ellipticity constants.
\end{lemma}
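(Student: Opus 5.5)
The plan is to compute $\mathcal{M}^{-}[\rho_i](x)=\inf_{L\in\mathcal{L}_*}L\rho_i(x)$ by bounding $L\rho_i(x)$ uniformly over the kernels. For each such kernel we reduce to one-dimensional computations by integrating along directions. Fix $x\in B_1\setminus B_{1/2}$ and set $d=1-|x|$. We write $y=r\theta$ with $\theta\in\mathbb{S}^{n-1}$, so that
\[
L\rho(x)=C_{n,s}\int_{\mathbb{S}^{n-1}}\mu(\theta)\int_0^\infty \frac{\rho(x+r\theta)+\rho(x-r\theta)-2\rho(x)}{r^{1+2s}}\,\rm{d}r\,\rm{d}\theta .
\]
Each inner integral is (up to a constant) the one-dimensional fractional Laplacian of $t\mapsto\rho(x+t\theta)$ at $t=0$. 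Since $\mu\in[\lambda,\Lambda]$, a pointwise sign or size estimate on each direction, uniform in $\theta$, transfers to $\mathcal{M}^-$ with constants depending only on $\lambda,\Lambda$.

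We compare $\rho_i$ with the one-dimensional profiles $(1-|x|-\cdot)_+^{\beta}$ for $\beta=s$ and $\beta=3s/2$. The key comparison is with the half-space function $\phi_\beta(z)=(z\cdot e)_+^\beta$, where $e=x/|x|$ and the distance is measured from the tangent plane at $x/|x|$.

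Known one-dimensional facts give $(-\Delta)^{s}(t_+^s)=0$ on $(0,\infty)$. For $\beta=3s/2\in(s,2s)$, a homogeneity argument gives $(-\Delta)^s(t_+^\beta)=-c_\beta t^{\beta-2s}$ with $c_\beta>0$; this holds because $\beta>s$ makes the value negative, and the constant stays bounded below for $s\ge s_0$. Applied directionwise, these give $L\phi_s(x)=0$ and $L\phi_{3s/2}(x)\ge c\,d^{-s/2}$ for every $L\in\mathcal{L}_*$.

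It remains to bound the error $\rho_i-\phi_\beta$.
\begin{itemize}
\item Near $x$, at scale $r\lesssim d$, both functions are smooth, and the difference $\operatorname{dist}(\cdot,\partial B_1)-(z\cdot e)$ behaves like a $C^{1,1}$ function vanishing to first order. Its contribution is $O(1)$ for $\beta=3s/2$ and $O\big(1+(1-s)|\log d|\big)$ for $\beta=s$.
\item In the intermediate range $d\lesssim r\lesssim1$, the two functions differ by $O(r^2)$ within $B_1$.
\item Far away, since $\rho_i$ is bounded while $\phi_\beta$ grows, the difference has a favorable sign or is integrable against $r^{-1-2s}$.
\end{itemize}
For the upper bound $\mathcal{M}^-[\rho_1]\le0$, we use that $\rho_1\le\phi_s$, which holds because $B_1$ lies inside the half-space, while equality holds at $x$. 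Hence $L\rho_1(x)\le L\phi_s(x)=0$ for every $L$.

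The main obstacle is the logarithmic bound for $\rho_1$, namely keeping the constant $(1-s)$ in front of $|\log d|$ uniform as $s\to1$. This requires integrating the curvature error $\int_d^1 r^{2}\cdot r^{-1-2s}\,\rm{d}r\sim (d^{2-2s}-1)/(2s-2)$, which is bounded by $C(1+(1-s)|\log d|)$, while also tracking how the kink of $\rho_1$ at the origin and at $\partial B_1$ interacts with the region where $\rho_1$ vanishes. This is the computation carried out in \cite{ros2016boundary}. Since the lemma is quoted from there, we would ultimately cite it and only verify that its setting, homogeneous kernels with $\mu\in[\lambda,\Lambda]$, coincides with $\mathcal{L}_*$.
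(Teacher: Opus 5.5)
Your proposal ends where the paper does: the paper gives no proof of this lemma and simply quotes it from \cite{ros2016boundary} for the class $\mathcal{L}_*$. So citing it, after matching the normalization of the kernels, is exactly the paper's route, and your half-space comparison giving $\mathcal{M}^-[\rho_1]\le 0$ is sound (with $\phi_\beta(z)=(1-z\cdot e)_+^\beta$, not $(z\cdot e)_+^\beta$ as you wrote).

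One correction to your heuristic for the logarithm. For $d\lesssim r\lesssim 1$, the difference $\phi_s-\rho_1$ is only $O(r^{2s})$, not $O(r^2)$, near $\partial B_1$: there $\rho_1$ vanishes while $\phi_s$ is of size $r^{2s}$. It is this that produces $(1-s)\int_d^1 r^{2s}r^{-1-2s}\,dr=(1-s)|\log d|$. An $O(r^2)$ error would give only $(1-s)\int_d^1 r^{1-2s}\,dr\le\tfrac{1}{2}$, with no logarithm. And without the factor $(1-s)$, your claimed bound $\frac{1-d^{2-2s}}{2-2s}\le C(1+(1-s)|\log d|)$ fails as $s\to 1$ with $d$ fixed.
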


We now derive a construction of a suitable subsolution for nonlocal operators perturbed by a potential term \(V\), that we will use in the following section. 

\begin{lemma}\label{supersolucion}
Let $s_0\in(0,1)$ and $s\in[s_0,1)$. There exists a radial  bounded continuous function $\varphi$ so that
\begin{equation}\label{rosoton1}
\begin{cases}
    \mathcal{M}^{-}[\varphi](x) - V^{-}(x)\varphi(x) \geq c & \text{ in } B_{1} \backslash B_{1/2}, \\ 
    \varphi(x) = 0 & \text{ in } \mathbb{R}^{n} \backslash B_{1}, \\
    \varphi(x) \geq c(1-|x|)^{s} & \text{ in } B_{1}, \\
    \varphi(x) \leq 1 & \text{ in } \overline{B_{1/2}},
\end{cases}
\end{equation}
for a positive constant $c$ that depends only on $n$, $s_0$ and the ellipticity constants.
\end{lemma}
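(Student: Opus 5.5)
The plan is to build $\varphi$ as a positive combination of the barriers of Lemma~\ref{rho} and a compactly supported interior bump, and then normalize. Throughout I assume $V^-$ is bounded on $B_1\setminus B_{1/2}$, and I track $\|V^-\|_{L^\infty}$ explicitly. As far as I can see, the constant $c$ must depend on this bound as well, unless the lemma is applied in a normalized or rescaled setting where $\|V^-\|_\infty$ is controlled. Let $d(x)=1-|x|$.

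The key structural fact is that $\mathcal{M}^-$ is superadditive and positively homogeneous: $\mathcal{M}^-[u+v]\ge \mathcal{M}^-[u]+\mathcal{M}^-[v]$ and $\mathcal{M}^-[tu]=t\mathcal{M}^-[u]$ for $t\ge 0$. Fix a smooth radial $\eta\ge 0$ with $\eta\not\equiv 0$, $\operatorname{supp}\eta\subset B_{1/4}$ and $\eta\le 1$. For $x\in B_1\setminus B_{1/2}$ we have $\eta(x)=0$, so every second difference $\eta(x+y)+\eta(x-y)-2\eta(x)$ is nonnegative. Hence, for every $L\in\mathcal{L}_*$,
\[
L\eta(x)\ge 2\lambda C_{n,s}\int_{B_{1/4}}\frac{\eta(z)}{|z-x|^{n+2s}}\,\rmd z\ge 2\lambda C_{n,s}\,2^{-n-2s}\int\eta=:c_0>0 ,
\]
using $|z-x|\le 2$; after taking $s\ge s_0$ into account, $c_0$ depends only on $n$, $s_0$ and $\lambda$. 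So $\mathcal{M}^-[\eta]\ge c_0$ in $B_1\setminus B_{1/2}$.

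Define $\psi=\rho_1+\rho_2+K\eta$ with $K$ large to be chosen. By Lemma~\ref{rho} and superadditivity, on $B_1\setminus B_{1/2}$,
\[
\mathcal{M}^-[\psi]\ \ge\ -C\bigl(1+|\log d|\bigr)+c\,d^{-s/2}-C+Kc_0 .
\]
Choose $d_0>0$, depending only on $n$, $s_0$ and ellipticity, such that $c\,d^{-s/2}\ge 2C(1+|\log d|)+2C+1$ for $d<d_0$. Also $\psi\le 3+K$ everywhere, so $V^-\psi\le (3+K)\|V^-\|_\infty$. Two regions must be handled.

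Near the boundary ($d<d_0$), $\mathcal{M}^-[\psi]$ is large, of size $d^{-s/2}$. The potential term there is only $V^-(\rho_1+\rho_2)\lesssim d^s$, because $\eta=0$ on $B_1\setminus B_{1/2}$. So the inequality $\mathcal{M}^-[\psi]-V^-\psi\ge 1$ holds once $d_0$ is additionally taken small relative to $\|V^-\|_\infty$.

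In the intermediate annulus ($d\ge d_0$, $|x|\ge 1/2$), the logarithmic loss is bounded by a constant $C(d_0)$. Again $\eta=0$ there, so $V^-\psi\le 2\|V^-\|_\infty$ independently of $K$. Choosing $Kc_0\ge C(d_0)+2\|V^-\|_\infty+1$ therefore gives $\mathcal{M}^-[\psi]-V^-\psi\ge 1$ on all of $B_1\setminus B_{1/2}$.

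The remaining properties follow directly. Clearly $\psi=0$ outside $B_1$, $\psi\ge\rho_1=d^s$ in $B_1$, and $\psi\le 2+K$ on $\overline{B_{1/2}}$. Set $\varphi=\psi/(2+K)$. By homogeneity, the differential inequality and the lower bound survive with constant $c=1/(2+K)$, and $\varphi\le 1$ on $\overline{B_{1/2}}$. Moreover $\varphi$ is radial, bounded and continuous, since $\rho_1$, $\rho_2$ and $\eta$ are.

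The main obstacle is the intermediate annulus. There Lemma~\ref{rho} gives no positive sign, and the logarithmic term in $\mathcal{M}^-[\rho_1]$ must be absorbed together with the potential. The bump $\eta$ is what resolves this, because its positive nonlocal contribution $c_0$ is uniform on $B_1\setminus B_{1/2}$. One must check that the normalization by $2+K$ does not destroy the required dependence of $c$; this is exactly where the dependence on $\|V^-\|_\infty$, noted at the start, enters.
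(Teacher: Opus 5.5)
Your construction is sound for each fixed $s$, but it does not give the constant the statement asks for. The failure is your claim that $c_0$ depends only on $n$, $s_0$ and $\lambda$.

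The bound $L\eta(x)\ge 2\lambda C_{n,s}2^{-n-2s}\int\eta$ at points $|x|\ge 1/2$, away from $\operatorname{supp}\eta$, is a pure tail contribution. The normalizing constant in front of the kernels is not bounded below on $[s_0,1)$. In the normalization for which Lemma~\ref{rho} holds uniformly in $s$, one gets $c_0\lesssim 1-s$. That normalization is the factor $1-s$ used in Proposition~\ref{lema2} and Theorem~\ref{thm1}, or equivalently $C_{n,s}\asymp 1-s$ as $s\to 1$.

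No better choice of bump fixes this. As $s\to 1$ the operators in $\mathcal{L}_*$ become local, so they do not see a function supported in $B_{1/4}$ from points of $B_1\setminus B_{1/2}$. Hence $Kc_0\ge C(d_0)+2\|V^-\|_\infty+1$ forces $K\gtrsim (1-s)^{-1}$, and your final constant $c=1/(2+K)$ degenerates like $1-s$. The lemma requires $c$ to depend only on $n$, $s_0$ and the ellipticity constants for all $s\in[s_0,1)$.

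For the application in Theorem~\ref{thm1}, where $s$ is fixed, your version would suffice. Your caveat about $\|V^-\|_\infty$ is also legitimate: the paper's own $\epsilon$ depends on it through $C_1$.

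The missing idea is to handle the intermediate annulus by scaling rather than by a nonlocal bump. The paper uses Lemma~\ref{rho} only to get $\mathcal{M}^-[\rho]-V^-\rho\ge 1$ for $\rho=\rho_1+\rho_2$ in a thin annulus $B_1\setminus\overline{B_{1-\epsilon}}$. It then sets $\widetilde\rho=\max_{0\le k\le N}C^k\rho(2^{k/N}x)$, where the $k$-th term realizes the maximum on $A_k=2^{-k/N}A_0$.

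At $x\in A_k$, $\widetilde\rho$ is touched from below by $C^k\rho(2^{k/N}\cdot)$. Superadditivity and the homogeneity of the kernels then give $\mathcal{M}^-[\widetilde\rho](x)\ge C^k 2^{2ks/N}\mathcal{M}^-[\rho](2^{k/N}x)$, with $2^{k/N}x$ in the good annulus. Every constant therefore comes from Lemma~\ref{rho} and stays uniform in $s\in[s_0,1)$.
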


\begin{proof}
    Consider the function \(\rho(x) :=\rho_{1}(x)+\rho_{2}(x)\); we proceed to construct a subsolution in the annulus \(B_{1}\backslash \overline{B_{1-\epsilon}}\). By Lemma~\ref{rho}, for \(|x|\in \left(\frac{1}{2},1\right)\) we have \[\mathcal{M}^{-}[\rho](x)-V^{-}(x)\rho(x)\geq -C\{1+(1-s)|\log(1-|x|)|\} + c(1-|x|)^{-s/2} - V^{-}(x)\left(\rho_{1}(x) + \rho_{2}(x)\right)-C,\]
    since \(\rho_{1}(x)=(1-|x|)^{s}\) and \(\rho_{2}(x) = (1-|x|)^{3s/2}\). Observing that both \(\rho_1\) and \(\rho_2\) vanish outside \(B_1\), and \(V^{-}\in L^{\infty}(B_{1})\), we conclude that \(-V^{-}(x)\rho(x)\geq - \|V^{-}\|_{\infty}\displaystyle\sup _{x\in B_{1}}\rho(x) = -2\|V^{-}\|_{\infty}\). It follows that
    \begin{equation*}
        \mathcal{M}^{-}[\rho](x)-V^{-}(x)\rho(x) \geq -C\{1+(1-s)|\log(1-|x|)|\} + c(1-|x|)^{-s/2} - 2\|V^{-}\|_{\infty} - C.
    \end{equation*}
    Hence, we can take \(\epsilon>0\) small enough such that \(\mathcal{M}^{-}[\rho] - V^{-}(x)\rho\geq1\) in \(B_{1}\backslash\overline{B_{1-\epsilon}}\), namely 
    \begin{equation*}
        \epsilon > \left( C_{1} + C_{2}|\log(1-|x|)|\right)^{-2/s},
    \end{equation*}
    with \(C_{1}:=c^{-1}\left(1+2C+2\|V^{-}\|_{\infty}\right)\) and \(C_{2}:=c^{-1}(1-s)C\).

    Given a large integer \(N\) and \(C>1\), we define \(0\leq k\leq N\), \(\displaystyle\widetilde{\rho}(x):=\max_{0\leq k\leq N} C^{k}\rho(2^{k/N}x)\) and \[A_{k} = \{x\in B_{1}:\widetilde{\rho}(x) = C^{k}\rho(2^{k/N}x)\}.\] 
    Since \(A_0 \subset B_{1}\backslash \overline{B_{1-\epsilon}}\), it follows that \(\mathcal{M}^{-}[\widetilde{\rho}]-V^{-}(x)\widetilde{\rho} \geq 1\) in \(A_0\). Further, we have \(A_k = 2^{-k/N}A_0\). For any \(x\in A_k\), it holds that \(2^{k/N}x\in A_0\subset B_{1}\backslash \overline{B_{1 - \epsilon}}\), thus
    \begin{align*}
        \mathcal{M}^{-}[\rho(2^{k/N})](x) &= 2^{2ks/N}\mathcal{M}^{-}[\rho](2^{k/N}x) 
        \geq \mathcal{M}^{-}[\rho](2^{k/N}x).
    \end{align*}
    For any \(x\in A_k\) we obtain the following estimate
    \begin{equation*}
        \mathcal{M}^{-}[\widetilde{\rho}](x) - V^{-}(x)\widetilde{\rho}(x) \geq C^{k}(\mathcal{M}^{-}[\widetilde{\rho}](2^{k/N}x) - V^{-}(2^{k/N}x)) \geq 1.
    \end{equation*}
    Finally, we set \(\varphi = c\widetilde{\rho}\) with \(c>0\) sufficiently small so that \(\varphi\leq 1\) in \(\overline{B_{1/2}}\).
\end{proof}


\section{Hopf Lemma and Strong Maximum Principle}

In this section, we present a version of Hopf lemma adapted to our nonlocal fully nonlinear context. This result will play an important role in the analysis of equation~\eqref{eq:puccinonlocal}, particularly in establishing the boundary behavior of nonnegative solutions. 

As a preliminary step, in light of \cite{dipierro2024fractional}, the following proposition will be instrumental.
\begin{proposition}\label{lema2}
    Let \(\Omega\) be an open subset of \(\mathbb{R}^{n}\), \(x_0\in\partial\Omega\) and \(u:\mathbb{R}^{n}\to\mathbb{R}\) be a continuous function such that \(u\geq 0\) in a ball \(B_{R}(x_0)\). Consider a subset \(S\Subset B_{R}(x_0)\), and denote \(d_0 = \mathrm{dist}(S, \partial B_{R}(x_0))\), then there exists a constant \(\widetilde{C}:=\widetilde{C}(n,s,\lambda,\Lambda)>0\) such that \[\mathcal{M}^{+}[u^{-}] \leq \widetilde{C}\|u^{-}\|_{L^{1}_{s}},\quad\forall x\in S.\]
\end{proposition}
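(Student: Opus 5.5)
Since $u\geq 0$ in $B_R(x_0)$, the function $u^-$ vanishes identically in $B_R(x_0)$. Hence at every $x\in S$ the increments of $u^-$ are nonnegative, and the operator has no singularity there: for $|y|<d_0$ we have $x\pm y\in B_R(x_0)$, so $u^-(x\pm y)=0=u^-(x)$. The plan is to compute $Lu^-(x)$ pointwise for each $L\in\mathcal{L}_*$ and bound it uniformly in $L$. There is a viscosity subtlety: $u^-$ is only continuous. But because $u^-\equiv 0$ near $x$, any test function touching $u^-$ from above or below at $x$ can be compared with $u^-$ directly. More simply, $u^-$ is $C^2$ (indeed constant) in $B_{d_0}(x)$, so the classical pointwise definition of $Lu^-(x)$ makes sense and coincides with the viscosity evaluation. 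I would therefore work with the pointwise integral throughout.

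For fixed $x\in S$ and $L\in\mathcal{L}_*$, the vanishing of $u^-$ near $x$ gives
\[
Lu^-(x)=C_{n,s}\int_{|y|\geq d_0}\big(u^-(x+y)+u^-(x-y)\big)\mu\!\left(\tfrac{y}{|y|}\right)\frac{\rmd y}{|y|^{n+2s}}
\leq 2\Lambda C_{n,s}\int_{\{|z-x|\geq d_0\}}\frac{u^-(z)}{|z-x|^{n+2s}}\,\rmd z,
\]
where the last step uses $\mu\le\Lambda$ and the substitutions $z=x\pm y$. Taking the supremum over $L$, the same bound holds for $\mathcal{M}^+[u^-](x)$.

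The key step is to compare the kernel $|z-x|^{-n-2s}$ on $\{|z-x|\ge d_0\}$ with the weight $(1+|z|^{n+2s})^{-1}$. Since $x\in S\subset B_R(x_0)$, we have $|x|\le |x_0|+R=:M$. If $|z|\le 2M$, then $1+|z|^{n+2s}\le 1+(2M)^{n+2s}$ while $|z-x|^{-n-2s}\le d_0^{-n-2s}$. If $|z|\ge 2M$, then $|z-x|\ge |z|/2$, so $|z-x|^{-n-2s}\le 2^{n+2s}|z|^{-n-2s}\le 2^{n+2s+1}(1+|z|^{n+2s})^{-1}$, using $|z|\geq 2M$ (if $M$ is small, I would use $|z-x|\ge d_0$ on the bounded region instead). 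In both regions we get
\[
\frac{1}{|z-x|^{n+2s}}\le \frac{C(n,s,d_0,R,|x_0|)}{1+|z|^{n+2s}},
\]
which yields $\mathcal{M}^+[u^-](x)\le \widetilde C\|u^-\|_{L^1_s}$ for every $x\in S$.

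I expect the main obstacle to be this kernel comparison, and specifically the dependence of the constant. The statement writes $\widetilde C(n,s,\lambda,\Lambda)$, but the honest constant also depends on $d_0$, on $R$ and on the position $x_0$. I would make this dependence explicit, or normalise $x_0=0$ by translation, in which case the weight changes only by a factor controlled by $|x_0|$. The remaining steps are routine.
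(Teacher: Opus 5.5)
Your proof is correct and follows the paper's route: restrict to $|y|\ge d_0$, bound $\mu\le\Lambda$, and compare $|y|^{-n-2s}$ with the $L^1_s$ weight. Your two-region kernel comparison is in fact more careful than the paper's. The paper uses $|x\pm y|\le 2\max\{|x|,|y|\}=2|y|$, which silently assumes $|y|\ge|x|$. The dependence of $\widetilde C$ on $d_0$, $R$ and $|x_0|$ that you flag is genuine: the paper's own $M_n$ already involves $d_0$, and without the position dependence the estimate fails for $x$ far from the origin.
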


\proof Let \(x\in S\), then \(x\in\{u \geq 0\}\) it follows that \(u^{-}(x) = \max\{-u(x),0\} = 0\). Furthermore, it can be observed that if \(y\in\mathbb{R}^{n}\) with \(u^{-}(x+y)>0\), then \(x+y\in\mathbb{R}^{n}\backslash B_{R}(x_0)\). In particular, we obtain \(|y| = |(x+y)-x|\geq d_0\). Now,
\begin{align*}
  Lu^{-}(x) &= (1-s)\int_{\mathbb{R}^{n}}
    ( {u^{-}(x+y) + u^{-}(x-y)} - 2u^{-}(x) )\mu \left(\frac{y}{|y|}\right)\frac{\mathrm{d}y}{|y|^{n+2s}} \\
 &\leq (1-s) 
    \Lambda\int_{\mathbb{R}^{n}\backslash B_{d_0}} 
    ({u^{-}(x+y) + u^{-}(x-y)} )\frac{\mathrm{d}y}{|y|^{n+2s}}.
\end{align*}
In order to get an appropriate bound for this integral, we observe that $|x\pm y|^{n+2s} \leq \left(|2\max\{|x|,|y|\}|\right)^{n+2s}=2^{n+2s}|y|^{n+2s},$ from which we can deduce 
\begin{center}
$1+|x+y|^{n+2s} \leq \left(\frac{|y|}{d_{0}}\right)^{n+2s} + 2^{n+2s}|y|^{n+2s} = |y|^{n+2s}\left(\frac{1}{d_{0}^{n+2s}} + 2^{n+2s}\right).$
\end{center}
Consequently, 
   $ \frac{1}{|y|^{n+2s}} \leq \frac{M_{n}}{1+|x+y|^{n+2s}},$ with $ M_{n}:=\left(\frac{1}{d_{0}^{n+2s}} + 2^{n+2s}\right).$
Considering these estimates, we express the integral in terms of the norm with weight of the negative part of \(u\),
\begin{align*}
 Lu^{-}(x) \leq {(1-s)}M_{n}\Lambda \int_{\mathbb{R}^{n}\backslash B_{d_0}}\left(\dfrac{u^{-}(x+y)}{1+|x+y|^{n+2s}} + \dfrac{u^{-}(x-y)}{1+|x-y|^{n+2s}}\right)\mathrm{d}y \leq {(1-s)}M_{n}\Lambda\|u^{-}\|_{L^{1}_{s}}.
\end{align*}
Therefore, 
$
    \mathcal{M}^{+}[u^{-}] \leq \widetilde{C}\|u^{-}\|_{L^{1}_{s}},
$
where \(\widetilde{C}:= {(1-s)}M_{n}\Lambda>0\).
\endproof

We are now in a position to state and prove the following version of the Hopf lemma. 
Given an open set $\Omega$ of $\mathbb{R}^{n}$, $n \geq 1$, we say $\Omega$ satisfies the interior ball condition at $x_{0}\in\partial\Omega$ if there is $r_{0}>0$ such that, for every $r\in(0,r_{0}]$ there exists a ball $B_{r}(x_{r})\subset \Omega$ with $x_{0} \in \partial B_{r}(x_{r})\cap\partial\Omega$.

\begin{theorem}\label{thm1}
    Let $\Omega \subset \mathbb{R}^{n}$ be an open subset and $x_{0}\in\partial\Omega$. Assume that $\partial\Omega$ satisfies the interior sphere condition at $x_0$. 
Let $u\in L_{s}^{1}(\R^n)$ be a lower semicontinuous function such that $u(x_{0})=0$ and $u^{-}\in L^{\infty}(\mathbb{R}^{n})$ and 
    \begin{equation*}
        \mathcal{M}^{-}[u](x) + V(x)u \leq 0 \qquad\text{ in }\Omega
    \end{equation*}
    in the viscosity sense, where $V\in L^{1}_{loc}(\Omega)$ is such that $V^{-}\in L^{\infty}(\Omega)$.
    Suppose that there exists $R>0$ such that $u\geq 0$ in $B_{R}(x_{0})$, $u>0$ in $\Omega\cap B_{R}(x_{0})$, and for some \(r\) small enough holds the following inequality
    \begin{equation}\label{inequalityhopf}
        \widetilde{C}\|u^{-}\|_{L^{1}_{s}} < \dfrac{\alpha_r}{r^{2s}}c,
    \end{equation}
    with \(C,c>0\) constants and \(\alpha_r = \inf_{B_{r/2}(x_r)}u\). Then, for every $\beta\in \left(0,\frac{\pi}{2}\right)$ we have 
    \begin{equation}\label{hopfslemma}
        \liminf_{x\to x_0, x\in\Omega}\frac{u(x)}{|x-x_{0}|^{s}}>0,
    \end{equation}
    whenever the angle between $x-x_0$ and the vector joins $x_0$ and the center of the interior sphere is smaller than $\pi/2-\beta$.
\end{theorem}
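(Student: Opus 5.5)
The approach is a barrier comparison in the interior ball $B_r(x_r)\subset\Omega$ tangent at $x_0$, using the radial subsolution of Lemma~\ref{supersolucion} rescaled to $B_r(x_r)$. We handle the nonlocal negative tail through Proposition~\ref{lema2} and the smallness condition \eqref{inequalityhopf}.

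\emph{Step 1 (localization and splitting).} Fix $r\le r_0$ small, with $B_r(x_r)\subset\Omega\cap B_R(x_0)$ and $x_0\in\partial B_r(x_r)$. Take the closed set $S=\overline{B_r(x_r)}\Subset B_R(x_0)$ (shrinking $r$ if needed). By lower semicontinuity and $u>0$ in $\Omega\cap B_R(x_0)$, we have $\alpha_r=\inf_{B_{r/2}(x_r)}u>0$.

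Write $u=u^+-u^-$. Since $u\ge0$ on $B_R(x_0)$, we have $u=u^+$ on $B_R(x_0)$. Using $\mathcal M^-[u^+]\le \mathcal M^-[u]+\mathcal M^+[u^-]$ (subadditivity of the extremal operators) and Proposition~\ref{lema2}, $u^+$ satisfies in the viscosity sense
\[
\mathcal M^-[u^+]-V^-u^+\le \mathcal M^-[u]+Vu+\mathcal M^+[u^-]\le \widetilde C\|u^-\|_{L^1_s}\quad\text{in } B_r(x_r).
\]
Here we used $-V^-u^+\le Vu$ on $B_r(x_r)$, since $u\ge0$ there. The step $\mathcal M^-[u^+]\le\ldots$ must be checked at the level of test functions: if $\phi$ touches $u^+$ from below at $x$, then $\phi-u^-$ touches $u$ from below, and $u^-$ is smooth (indeed zero) near $x$. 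This is legitimate because $u^-\in L^\infty$ and vanishes in $B_R(x_0)$.

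\emph{Step 2 (barrier).} Let $\varphi$ be from Lemma~\ref{supersolucion} with $V$ replaced by the rescaled potential $r^{2s}V^-(x_r+r\,\cdot)$. Its $L^\infty$ norm is at most $r^{2s}\|V^-\|_\infty\le\|V^-\|_\infty$, so the constant $c$ is uniform in $r$. Set
\[
w(x)=\alpha_r\,\varphi\!\left(\frac{x-x_r}{r}\right).
\]
By the scaling $\mathcal M^-[\varphi(\cdot/r)]=r^{-2s}\mathcal M^-[\varphi](\cdot/r)$, we get
\[
\mathcal M^-[w]-V^-w\ge \frac{c\,\alpha_r}{r^{2s}}\quad\text{in } B_r(x_r)\setminus B_{r/2}(x_r).
\]
Moreover $w=0$ outside $B_r(x_r)$, and $w\le\alpha_r\le u^+$ on $\overline{B_{r/2}(x_r)}$.

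\emph{Step 3 (comparison).} Condition \eqref{inequalityhopf} gives $\widetilde C\|u^-\|_{L^1_s}<c\alpha_r r^{-2s}$. So in the annulus $A=B_r(x_r)\setminus\overline{B_{r/2}(x_r)}$, $w$ is a strict subsolution and $u^+$ a supersolution of the same operator with ordered right-hand sides. On $\mathbb R^n\setminus A$ we have $w\le u^+$, because $u^+\ge0=w$ outside $B_r(x_r)$ and by the bound on $B_{r/2}(x_r)$. A comparison principle then yields $u^+\ge w$ in $A$.

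I would prove the comparison directly. Suppose $\min(u^+-w)<0$; the minimum is attained at some $\bar x\in A$ by lower semicontinuity. Use $w+\min(u^+-w)$ shifted as a test function touching $u^+$ from below at $\bar x$; this requires $w\in C^2$ near $\bar x$, which holds since $\rho_1,\rho_2$ are smooth in the interior of the ball. The strictness of the inequality then gives a contradiction, with the zero-order term handled by $-V^-\le0$ and the negative minimum.

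\emph{Step 4 (conclusion).} From $u\ge w\ge c\,\alpha_r\,(1-|x-x_r|/r)^s=c\,\alpha_r\, r^{-s}\,(r-|x-x_r|)^s$ in $B_r(x_r)$, it remains to control $r-|x-x_r|$. If the angle between $x-x_0$ and $x_r-x_0$ is at most $\pi/2-\beta$, elementary geometry gives $r-|x-x_r|\ge c_\beta|x-x_0|$ for $x$ near $x_0$, and such $x$ lie in $B_r(x_r)$. This yields \eqref{hopfslemma}.

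The main obstacle is Step 1 together with the comparison in Step 3 in the viscosity framework with a merely $L^1_{loc}$ potential $V$. Discarding $V^+$ uses $u\ge0$ in the ball, and for the test-function argument I would rely on the pointwise inequality at $\bar x$ rather than on any general comparison theorem.
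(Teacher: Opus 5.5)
Your proposal is correct and is essentially the paper's proof: the paper compares $u$ with $w=\psi_r-u^-$, absorbing $\mathcal{M}^+[u^-]$ via Proposition~\ref{lema2} on the barrier side, which is the same inequality as your $u^+\ge\psi_r$ since $u-(\psi_r-u^-)=u^+-\psi_r$, and it finishes with the same cone geometry. Your direct test-function argument at the negative minimum is just a proof of the comparison principle the paper invokes. One caveat, which the paper also glosses over: the barrier of Lemma~\ref{supersolucion} is a maximum of finitely many rescaled copies of $\rho$, so it is only piecewise smooth. At $\bar x$ you should therefore test with the active smooth piece; it lies below $w$, touches it at $\bar x$, and is the function through which the proof of that lemma obtains the lower bound.
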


\begin{proof}
The idea of the proof is to apply a comparison argument between the function \(u\) and a barrier function \(\psi_r\) in an annulus \(A_r\), with \(u(x) \geq 0\) for all \(x\in A_r\). Since the barrier a smooth function, one may apply the argument as it $u$ was a regular solution, up to considering \(\psi_r\) as a test function. 

Let us consider the negative part of the potential, \(V^{-}(x)\), in place of \(V(x)\), because if \(V = V^{+}-V^{-}\) in the annulus \(A_r\) with \(V^{+}(x)u \geq 0\), it follows that
\begin{align*}
    \mathcal{M}^{-}[u]-V^{-}(x)u &\leq \mathcal{M}^{-}[u] + V^{+}(x)u - V^{-}(x)u \\
   & = \mathcal{M}^{-}[u] + V(x)u.
\end{align*}

Let \(x_0\) be a boundary point of the domain \(\Omega\), and suppose the interior ball condition holds at \(x_0\). This means that for every sufficiently small radius \(r>0\), there exists a point \(x_r\) inside \(\Omega\) such that the open ball \(B_{r}(x_r)\) lies entirely inside \(\Omega\) and is tangent to the boundary at \(x_0\). Now, for any radius \(r>0\), we consider any \(x\in B_{r}(x_r)\) and set \(\alpha_r:=\inf_{B_{r/2}(x_r)}u\), 
\begin{equation*}
    \psi_{r}(x)=\alpha_{r}\varphi\left(\frac{x-x_{r}}{r}\right),
\end{equation*}
where \(\varphi\) and \(C>0\) be as in Lemma~\ref{supersolucion}. Given that \(B_{r}(x_r)\Subset\Omega \cap B_{R}(x_0)\), then \(u > 0\) and we conclude that \(\alpha_{r}>0\). Furthermore, for \(x\in B_{r}(x_r)\backslash \overline{B_{r/2}(x_r)}\), and an arbitrary kernel \(a\),
\begin{align*}
    L\psi_r(x) &= (1-s)\int_{\mathbb{R}^{n}}
    ({\psi_{r}(x+y) + \psi_{r}(x-y)}-2\psi_{r}(x))\mu \left(\dfrac{y}{|y|}\right)\dfrac{\mathrm{d}y}{|y|^{n+2s}} \\
    &=\alpha_{r}(1-s)\int_{\mathbb{R}^{n}}
    \left[ {\varphi(\frac{x+y-x_{r}}{r}) + \varphi(\frac{x-y-x_{r}}{r})}-2\varphi\left(\frac{x-x_r}{r}\right) \right]
    \mu\left(\dfrac{y}{|y|}\right)\dfrac{\mathrm{d}y}{|y|^{n+2s}} \\ 
    &= \frac{\alpha_{r}(1-s)}{r^{2s}}\int_{\mathbb{R}^{n}}
    \left( \varphi(x'+y') + \varphi(x'-y')-2\varphi\left(x'\right)\right)
    \mu \left(\dfrac{y'}{|y'|}\right)\dfrac{\mathrm{d}y'}{|y'|^{n+2s}} 
    = \frac{\alpha_{r}}{r^{2s}} L\varphi(x'),
\end{align*}
where we apply the change of variables \(x'=\frac{x-x_r}{r}\) and \(y'=\frac{y}{r}\). For the potential, we have
\begin{align*}
    V^{-}(x)\psi_{r}(x) = \alpha_{r}V^{-}(x_r+rx')\varphi(x')=\alpha_{r}r^{-2s}\widetilde{V^{-}}(x')\varphi(x'),
\end{align*}
where \(\widetilde{V^{-}}(x')=r^{2s}V^{-}(x_{r}+rx')\). It follows that \[|\widetilde{V^{-}}(x')|=r^{2s}\left|V^{-}(x_{r}+rx')\right|\leq r^{2s}\|V^{-}\|_{\infty}\leq\|V^{-}\|_{\infty}.\] Now, by applying Lemma~\ref{supersolucion}, we obtain that for any \(x\in B_{r}(x_r)\backslash B_{r/2}(x_r)\)
\begin{align*}
    \mathcal{M}^{-}[\psi_r](x) - V^{-}(x)\psi_{r}(x) = \frac{\alpha_{r}}{r^{2s}}\inf_{L\in\mathcal{L}_{*}}L\varphi(x') - \alpha_{r}r^{-2s}\widetilde{V^{-}}(x')\varphi(x') 
    \geq \frac{\alpha_{r}}{r^{2s}}c. 
\end{align*}
In addition, we observe that
\begin{align*}
    \psi_{r}(x)=\alpha_{r}\varphi\left(\frac{x-x_{r}}{r}\right) \geq \alpha_{r}c\left(1-\left|\frac{x-x_{r}}{r}\right|\right)^{s} = c\dfrac{\alpha_r}{r^{s}}\left( r-|x-x_r|\right)^{s},\quad x\in B_{r}(x_r).
\end{align*}
Also, we have \(\psi_r \leq \alpha_r\) in \(\overline{B_{r/2}(x_r)}\). In this way, we have established the following scaled barrier function:
\begin{equation}
\begin{cases}
    \mathcal{M}^{-}[\psi_r] - V^{-}(x)\psi_{r}(x) \geq \frac{\alpha_r}{r^{2s}}c & \text{ in } B_{r}(x_r) \backslash B_{r/2}(x_r), \nonumber\\ 
    \psi_r = 0 & \text{ in } \mathbb{R}^{n} \backslash B_{r}(x_r), \nonumber\\
    \psi_r \geq c\frac{\alpha_r}{r^s}(r-|x-x_r|)^{s} & \text{ in } B_{r}(x_r), \numberthis \label{scaling}\\
    \psi_r \leq \alpha_r & \text{ in } \overline{B_{r/2}(x_r)}. 
\end{cases}
\end{equation}

Now, we aim to show that the function \(w := \psi_r - u^{-}\) satisfies the inequality \(u \geq w\) via comparison principle. First, note that, in \(\mathbb{R}^{n}\backslash B_{r}(x_{r})\), the function \(w\) is bounded above by \(u\), since $w = - u^{-} = u - u^{+} \leq u.$ Additionally, in \(\overline{B_{r/2}(x_r)}\) we have \[w \leq \alpha_r - u^{-} \leq \inf_{B_{r/2}(x_r)}u \leq u.\]

Let us consider \(S = B_{r}(x_r) \backslash\overline{B_{r/2}(x_r)}\), observe that the distance between \(B_{r}(x_r)\) and \(\partial B_{R}(x_0)\) is greater than \(R/2\), whenever \(r>0\) is taken small enough. Thus, following Proposition~\ref{lema2}, the constant \(\widetilde{C}>0\) does not depend on \(r\). Consequently, applying Proposition~\ref{lema2} with \(S\), together with the standard inequalities for Pucci extremal operators, for all \(x\in B_{r}(x_r) \backslash\overline{B_{r/2}(x_r)}\), and taking into account that \(u^{-}(x)=0\) for \(x\in S\), we have
\begin{align*}
    \mathcal{M}^{-}[w](x) - V^{-}(x)w(x) &\geq \mathcal{M}^{-}[\psi_r](x) + \mathcal{M}^{-}[-u^{-}](x) - V^{-}(x)\psi_{r}(x) \\
    &= \mathcal{M}^{-}[\psi_r](x) - \mathcal{M}^{+}[u^{-}](x) - V^{-}(x)\psi_{r}(x)\\
    &\geq \dfrac{\alpha_r}{r^{2s}}c - \widetilde{C}\|u^{-}\|_{L^{1}_{s}} \geq 0,
\end{align*}
where in the last inequality we used~\eqref{inequalityhopf}.
Therefore for all \(x\in B_{r}(x_r) \backslash\overline{B_{r/2}(x_r)}\),
\begin{align*}
    \mathcal{M}^{-}[u-w](x) - V^{-}(x)(u-w)(x) &\leq \mathcal{M}^{-}[u](x) + \mathcal{M}^{+}[-w](x) - V^{-}(x)u(x) + V^{-}(x)w(x) \\
    &\leq -\mathcal{M}^{-}[w](x) + V^{-}(x)w(x) \leq 0.
\end{align*}

In summary, we have that, 
\begin{equation*}
    \begin{cases}
    \mathcal{M}^{-}[u-w] - V^{-}(x)(u - w) \leq 0 & \text{ in } B_{r}(x_r)\backslash\overline{B_{r/2}(x_r)}, \\ 
    u \geq w & \text{ in }\mathbb{R}^{n}\backslash ( B_{r}(x_r)\backslash\overline{B_{r/2}(x_r)} ),
    \end{cases}
    \end{equation*}
and the comparison principle yields that \(u\geq w\) in \(B_{r}(x_r)\backslash\overline{B_{r/2}(x_r)}\).

To finish the proof, we follow the same reasoning as in \cite{dipierro2024fractional}, for $\beta\in(0,\pi/2)$, let 
\begin{equation}
    \mathcal{C}_{\beta} := \left\{x\in\Omega:\dfrac{x-x_0}{|x-x_0|}\cdot\nu >c_{\beta}\right\},
\end{equation}
where $c_{\beta}:=\cos\left(\frac{\pi}{2}-\beta\right)>0$ and $\nu$ is the normal vector joining $x_0$ with the center of the interior ball. 

Take any sequence $x_k\in \mathcal{C}_\beta$ such that $x_k\to x_0$ and for $k$ is sufficiently large, one has $|x_{k}-x_{0}|<\min\{2rc_{\beta}, r/2\}$. On the one hand, writing $x_r=x_0 +r\nu$, as in \cite{dipierro2024fractional},
\begin{equation*}
    |x_k-x_r|^2=|x_k-x_0-r\nu|^2\leq r^2-|x_k-x_0|(2c_\beta r-|x_k-x_0|)<r^2.
\end{equation*}
On the other hand,
$
    |x_k-x_r|\geq |x_r-x_0|-|x_k-x_0|=r-|x_k-x_0|>\frac{r}{2},
$
for $k$ large enough, we have $x_k\in B_{r}(x_r)\backslash \overline{B_{r/2}(x_r)}$. Next, since $r+|(x_k-x_0)-r\nu|\leq 2r + \frac{r}{2}=\frac{5}{2}r$,
\begin{align*}
    u(x_k) &\geq \textstyle w(x_k) 
    = \psi_r(x_{k}) \geq \frac{\alpha_r}{r^s}(r-|x_k-x_r|)^{s} = \frac{\alpha_r}{r^s}(r-|(x_k-x_0)-r\nu|)^{s} \\
    & \textstyle \geq \left(\frac{2}{5}\right)^{s}\frac{\alpha_r}{r^{2s}}(r^2-|(x_k-x_0)-r\nu|^2)^{s} \\
    & \textstyle \geq \left(\frac{2}{5}\right)^{s}\frac{\alpha_r}{r^{2s}}(2c_{\beta}r|x_{k}-x_{0}| -|x_{k}-x_{0}|^{2})^{s} > \left(\frac{2}{5}\right)^{s}\frac{\alpha_r}{r^{2s}}(2c_{\beta}r)^{s} |x_{k}-x_0|^{s}.
\end{align*}
Therefore,
\begin{equation*}
 \textstyle   \liminf_{k\to\infty}\frac{u(x_k)}{|x_k-x_0|^s}\geq\left(\frac{4}{5}\right)^{s}\frac{\alpha_r c_{\beta}^s}{r^s}.
\end{equation*}

This concludes the proof of Theorem \ref{thm1}.
\end{proof}

We now establish a version of the strong maximum principle as a consequence of fractional Hopf lemma. Although there exists a simpler argument that avoids the use of Hopf lemma, the proof we present here is more natural since it follows the step of the classical proof of Strong maximum principle for the local case.

\begin{proposition}[{Strong Maximum Principle}]\label{SMP} Let \(u\in L^{1}_{s}(\mathbb{R}^{n})\). Then, for
\begin{equation*}
    \begin{cases}
    \mathcal{M}^{-}[u] + V(x)u \leq 0  & \text{ in } \Omega, \\ 
    u \geq 0 & \text{ in }\mathbb{R}^{n},
    \end{cases}
\end{equation*}
implies that \(u>0\) in \(\Omega\) or \(u\equiv 0\) in \(\Omega\).
\end{proposition}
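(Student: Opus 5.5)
We follow the classical Hopf-based scheme for the strong maximum principle. We work under the standing assumptions of Theorem~\ref{thm1} on the potential ($V\in L^1_{loc}(\Omega)$ with $V^-\in L^\infty(\Omega)$), and we take $u$ lower semicontinuous as a viscosity supersolution. Since $u\ge 0$ in $\mathbb{R}^n$, we have $u^-\equiv 0$. This is the key simplification: the smallness condition \eqref{inequalityhopf} becomes $0<\frac{\alpha_r}{r^{2s}}c$, which holds as soon as $\alpha_r>0$. Moreover, $V^+u\ge 0$, so $u$ satisfies $\mathcal{M}^-[u]-V^-u\le 0$ in $\Omega$, as in the first step of the proof of Theorem~\ref{thm1}.

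Suppose $u\not\equiv 0$ in $\Omega$ and $u$ vanishes somewhere in $\Omega$. We argue by contradiction and may assume $\Omega$ connected (otherwise we work componentwise; this is where $u\equiv0$ in a component should be read). Set $Z=\{x\in\Omega: u(x)=0\}$, which is closed in $\Omega$ by lower semicontinuity and $u\ge0$. Its complement $\Omega^+=\{u>0\}\cap\Omega$ is open, nonempty, and $\partial\Omega^+\cap\Omega\neq\emptyset$.

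As in the classical proof, we pick $y\in\Omega^+$ closer to $Z$ than to $\partial\Omega$. We take the largest ball $B=B_\rho(y)\subset\Omega^+$; its boundary touches $Z$ at some $x_0\in\Omega$. Shrinking, we may assume $\overline{B}\setminus\{x_0\}\subset\Omega^+$, so $B$ provides an interior sphere condition for the open set $\Omega^+$ at $x_0\in\partial\Omega^+$. We now apply Theorem~\ref{thm1} with $\Omega^+$ in place of $\Omega$ and the point $x_0$. The hypotheses check out as follows.
\begin{itemize}
\item $u(x_0)=0$ and $u^-=0\in L^\infty$.
\item The inequality holds in $\Omega^+\subset\Omega$.
\item $u\ge 0$ in any ball $B_R(x_0)$.
\item $u>0$ in $\Omega^+\cap B_R(x_0)$.
\item $\alpha_r=\inf_{B_{r/2}(x_r)}u>0$, since $\overline{B_{r/2}(x_r)}$ is a compact subset of $\Omega^+$ and $u$ is lsc and positive there, so the infimum is attained and positive.
\end{itemize}
Hence \eqref{hopfslemma} gives $u(x)\ge c|x-x_0|^s$ along a cone toward $x_0$.

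This positivity alone does not contradict anything, since $s<1$ and the infimum at $x_0$ is $0$. To close the argument, we test the supersolution inequality at the interior point $x_0\in\Omega$ itself, which is a global minimum of $u$. We take the test function $\phi\equiv0$ near $x_0$ and $\phi=\min(u,\cdot)$-type away. Concretely, we use $\phi=\psi_r$-like smooth functions below $u$ vanishing at $x_0$, or more simply a smooth $\phi\le u$ with $\phi(x_0)=0$, $\phi\in C^2(N_{x_0})$, $\phi\equiv 0$ in a small ball $B_\varepsilon(x_0)$, and $\phi=\min\{u,\psi\}$-regularized positive on a set of positive measure inside $B$. The definition of viscosity supersolution gives
\[
\mathcal{M}^-[\phi](x_0)+V(x_0)\cdot 0\le 0 ,
\]
while
\[
\mathcal{M}^-[\phi](x_0)\ge 2\lambda C_{n,s}\int \frac{\phi(y)}{|y-x_0|^{n+2s}}\,dy>0,
\]
which is a contradiction. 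The Hopf lower bound is used precisely to guarantee that such a $\phi$ exists with positive mass near $x_0$. Indeed, we may take $\phi=w=\psi_r$ from the proof of Theorem~\ref{thm1}, which is $\le u$ globally since $u^-=0$, smooth away from $\partial B_r(x_r)$, and one positions $x_r$ so that $x_0\notin \overline{B_r(x_r)}$ with $\phi\equiv0$ near $x_0$.

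The main obstacle is this last step: arranging a legitimate test function touching $u$ from below at $x_0$ with the required $C^2$ regularity near $x_0$. The pointwise value of $V$ at $x_0$ is irrelevant because $\phi(x_0)=0$, but one must interpret $V(x_0)u(x_0)$ for $V\in L^1_{loc}$; we would assume $V$ continuous or read the term as $0$. We would also handle the degenerate case of a zero set with empty interior boundary structure via the largest-ball construction above.
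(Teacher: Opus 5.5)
Your proof is correct, but it reaches the contradiction by a different mechanism than the paper.

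The paper follows the classical local scheme. At the touching point $x_0$ of the largest ball it applies Theorem~\ref{thm1}, and it plays the cone bound $\liminf u(x)/|x-x_0|^s>0$ against the assertion that, at an interior minimum, $\liminf_{x\to x_0}(u(x)-u(x_0))/|x-x_0|=0$. That assertion tacitly requires $u$ to be differentiable at $x_0$, so that $\nabla u(x_0)=0$ and the first-order quotient also tends to $0$ inside the cone where the Hopf bound holds. For a merely lower semicontinuous supersolution it can fail, e.g.\ $u=|x-x_0|^s$ near $x_0$. This is exactly the objection you raise.

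You instead test the supersolution inequality at $x_0$ with some $\phi\le u$ that is nonnegative, vanishes near $x_0$, and has positive mass. Then $\mathcal{M}^-[\phi](x_0)\ge 2\lambda C_{n,s}\int\phi(y)|y-x_0|^{-n-2s}\,dy>0$, while the zero-order term is $V(x_0)\cdot 0$. This is the purely nonlocal argument sketched in the introduction (the ``simpler argument'' the paper mentions before the proposition), carried out with test functions. It needs nothing beyond lower semicontinuity.

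In your write-up, however, the Hopf step does no work. Your claim that the Hopf bound is what guarantees positive mass near $x_0$ is not right. The set $\{u>0\}$ is open by lower semicontinuity, so if $u(\bar x)>0$ with $\bar x\in\Omega$, then $u>u(\bar x)/2$ on a ball $B_\delta(\bar x)$ with $x_0\notin\overline{B_\delta(\bar x)}$. Take $\phi=\tfrac{u(\bar x)}{2}\eta$ with $\eta\in C_c^\infty(B_\delta(\bar x))$, $0\le\eta\le 1$, $\eta\not\equiv 0$. Because $u\ge 0$ everywhere, this is an admissible, even globally smooth, test function. The mass need not be near $x_0$, since the kernel charges all of $\mathbb{R}^n$.

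Hence the largest-ball construction, the appeal to Theorem~\ref{thm1} (with the comparison principle behind $\psi_r\le u$), and the reduction to connected $\Omega$ can all be deleted. A single zero of $u$ in $\Omega$ and a single positive value in $\Omega$ already contradict each other, which gives the statement as written for arbitrary $\Omega$. Testing with $u\mathbf{1}_{\mathbb{R}^n\setminus B_\varepsilon(x_0)}$ even gives $u=0$ a.e.\ in $\mathbb{R}^n$.
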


\proof Set \(\Omega_{0} = \{x\in B_{1}, u(x)=0\}\), \(\Omega^{+} = \{x\in B_{1}, u(x) > 0\}\). Suppose, for the sake of contradiction, that both sets \(\Omega_{0}\) and \(\Omega^{+}\) are nonempty.
Let \(\widetilde{x} \in \Omega^{+}\), and consider the largest ball \(B_{R}(\widetilde{x})\subset \Omega^{+}\), \(R>0\) on which \(\partial B_{R}(\widetilde{x})\cap \partial\Omega_{0} \neq \emptyset\), we can find a point \(x_0 \in \partial B_{R}(\widetilde{x})\cap \partial\Omega_{0} \) such that \(u=0\) in that region. Since \(x_0\in\Omega\) is a minimum point in the interior, 
\begin{equation*}
  \textstyle  \liminf_{x\to x_0, x\in\Omega}\frac{u(x) - u(x_0)}{|x-x_0|} = 0.
\end{equation*}

On the other hand, the fact that the solution is nonnegative everywhere implies that it maintains this sign in a whole neighbourhood of \(x_0\). In particular, within the portion of this neighborhood which belong to \(B_{R}(\widetilde{x})\), the function is positive, since we are located inside the region of positivity, \(\Omega^{+}\). Then, and we are precisely under the conditions in which the Hopf lemma applies. Thus, $\liminf\frac{u(x)}{|x-x_{0}|^{s}}>0.$ But then, 
\begin{equation*}
 \textstyle   \liminf_{x\to x_0, x\in\Omega}\frac{u(x) - u(x_0)}{|x-x_0|} = \liminf_{x\to x_0, x\in\Omega}\frac{u(x) - u(x_0)}{|x-x_0|^{s}}\frac{1}{|x-x_0|^{1-s}} = +\infty,
\end{equation*}
a contradiction. Therefore, one of the two sets must be empty.
\endproof

If in the previous proposition we remove the assumption that \( u \geq 0 \) in \( \mathbb{R}^{n} \), the problem becomes more delicate, and it does not descent for regions where \( u = 0 \) even though \( u \) is not identically zero in \( \Omega \). In this case, the following theorem guarantees higher regularity for supersolutions at points on \( \partial\{u > 0\} \) that can be touched by a ball from within the positivity set.

\begin{proposition} Let \(u\in L^{1}_{s}(\mathbb{R}^{n})\cap C^{0,\beta}(\overline{\Omega})\) for some \(\beta>0\), and \[T=\{x\in \partial\{u>0\}\cap\Omega:x\in \partial B_{r_0}(x_0) \mbox{ with }B_{r_0}(x_0) \subset \{u>0\}\}.\]
If \(u\geq 0\) in \(\Omega\) is a solution of $\mathcal{M}^{-}[u] + V(x)u \leq M$ in $ \Omega,$ for \(M>0\), then \[|u(y)|\leq C|y-x|^{2s} \mbox{ for } y\in \Omega \mbox{ and } x\in T,\] where \(C>0\) depends on the parameters \(n,\lambda,\Lambda,s\).
\end{proposition}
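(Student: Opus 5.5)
The plan is to read the statement as a quadratic-type growth estimate for nonnegative supersolutions at an interior minimum point. We blow up at a point \(x\in T\) and use a weak Harnack inequality to turn \(u(x)=0\) into smallness of \(u\) near \(x\). Fix \(x\in T\), so that \(u(x)=0=\min_\Omega u\), and set \(d=\mathrm{dist}(x,\partial\Omega)\). For \(0<r<d/2\) consider the rescaling
\[
u_r(z):=\frac{u(x+rz)}{r^{2s}} .
\]
By the same change of variables used for \(\psi_r\) in the proof of Theorem~\ref{thm1}, \(u_r\) satisfies, in the viscosity sense in \(B_2\),
\[
\mathcal{M}^{-}[u_r]+r^{2s}V(x+rz)\,u_r\le M .
\]
Moreover \(u_r\ge 0\) in \(B_{d/r}\supset B_2\) and \(u_r(0)=0\). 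As in the proof of Theorem~\ref{thm1}, we may replace \(V\) by \(-V^-\), since \(V^+u\ge 0\). The potential term then has size at most \(r^{2s}\|V^-\|_\infty\le\|V^-\|_\infty\), so it is harmless.

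The key step is to apply the nonlocal weak Harnack inequality for \(\mathcal{M}^-\) (the Caffarelli--Silvestre / Kassmann type estimate, which I would import for the class \(\mathcal{L}_*\)) to \(u_r\) on \(B_2\). It should give, for some \(\varepsilon>0\),
\[
\Big(\textstyle\fint_{B_1}u_r^{\varepsilon}\Big)^{1/\varepsilon}\le C\Big(\inf_{B_{1/2}}u_r+M+\|V^-\|_\infty\sup_{B_2}u_r+\mathrm{Tail}(u_r^-)\Big).
\]
The infimum vanishes. The tail term is, after undoing the scaling,
\[
\int_{\mathbb{R}^n\setminus B_{d}(x)}\frac{u^-(y)}{|y-x|^{n+2s}}\,\mathrm{d}y\le C(d)\,\|u^-\|_{L^1_s},
\]
which is independent of \(r\); this is the computation of Proposition~\ref{lema2}. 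The potential term I would absorb, either by a first application of weak Harnack that bounds \(\sup_{B_2}u_r\) crudely, or by first working with \(M^-[u]\le M+\|V^-\|_\infty\sup_\Omega u\) and treating the right-hand side as a constant. Either way we obtain a scale-invariant bound \(\|u\|_{L^\varepsilon(B_r(x))}\le C\,r^{2s+n/\varepsilon}\).

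Finally, the \(L^\varepsilon\) bound must be converted into a pointwise bound using \(u\in C^{0,\beta}\). If \(u(y)=h\) with \(|y-x|=r\), then \(u\ge h/2\) on \(B_\rho(y)\) with \(\rho=(h/2[u]_\beta)^{1/\beta}\). Comparing with the \(L^\varepsilon\) bound on \(B_{2r}(x)\) then bounds \(h\) by a power of \(r\). For \(y\) far from \(x\) (\(|y-x|\ge d/2\)), the estimate is trivial from boundedness of \(u\) on \(\overline\Omega\).

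I expect this last conversion to be the main obstacle. Done naively, it yields \(h\lesssim r^{\gamma}\) with \(\gamma=(2s+n/\varepsilon)/(1+n/(\varepsilon\beta))\), which is smaller than \(2s\) in general. To recover the exact exponent \(2s\) I would iterate the argument on dyadic scales \(r_k=2^{-k}\). Each step, applied to the rescaled function, improves the oscillation bound on \(B_{r_k}(x)\) by a fixed factor \(2^{-2s}\), and here the interior touching ball \(B_{r_0}(x_0)\subset\{u>0\}\) would be used to localize, as in Proposition~\ref{SMP}. One should also expect the constant \(C\) to depend additionally on \(M\), \(\|V^-\|_\infty\), \(\|u^-\|_{L^1_s}\), \([u]_\beta\) and \(\mathrm{dist}(x,\partial\Omega)\), not only on \(n,\lambda,\Lambda,s\). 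I would make this dependence explicit rather than try to remove it.
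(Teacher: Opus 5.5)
There is a genuine gap, and it is the step you flagged: passing from averaged smallness to a pointwise bound with exponent $2s$. Everything you obtain before that step is integral information. You do not even need weak Harnack for it. Testing the supersolution inequality at $x$ with $\phi=u\,\chi_{\mathbb{R}^n\setminus B_\rho(x)}$ and letting $\rho\to0$ gives $\int_\Omega u(y)|y-x|^{-n-2s}\,\mathrm{d}y\le C(M,\|u^-\|_{L^1_s},\mathrm{dist}(x,\partial\Omega))$, hence $\int_{B_r(x)}u\le Cr^{n+2s}$, which is your estimate with $\varepsilon=1$. This also makes $V$ irrelevant, since $\phi(x)=0$.

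Your dyadic iteration has no mechanism that improves a \emph{supremum}. Any decay of $\sup_{B_{r_k}(x)}u$, or of the oscillation, would need an $L^\infty$ bound from above in terms of $L^\varepsilon$ norms, i.e.\ a local maximum principle, and that requires a subsolution inequality. The touching ball $B_{r_0}(x_0)\subset\{u>0\}$ only carries lower-bound information.

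The step cannot be repaired, because the pointwise bound fails for supersolutions, at least when $n>2s$ and $\beta<\min\{1,2s\}$: such supersolutions may carry tall thin spikes accumulating at $x$. For instance, take $x=0$, $p_k=2^{-k}e_1$, $\rho_k=\theta_k|p_k|$, $\sum_k\eta_k<\infty$, $\theta_k\to0$, and
\[
W(y)=\sum_k \eta_k|p_k|^{n}\Big(\min\{|y-p_k|^{2s-n},\rho_k^{2s-n}\}+\min\{|y+p_k|^{2s-n},\rho_k^{2s-n}\}-2|p_k|^{2s-n}\Big).
\]
Each term is a minimum of a translated fundamental solution and a constant, so $W$ is $s$-superharmonic with $W(0)=0$. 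The symmetric pairs cancel the linear terms, so $W\ge -C\sum_k\eta_k|p_k|^{2s}\Psi(y/|p_k|)$ for a smooth bounded $\Psi$ vanishing only at $0$. Adding twice this correction, whose fractional Laplacian is bounded by $C\sum_k\eta_k$, gives $u\ge0$ with $\mathcal{M}^-[u]\le M$; here one uses $\mathcal{M}^-[u]\le Lu$ for the constant-$\mu$ operator in $\mathcal{L}_*$. Moreover $\{u>0\}=\mathbb{R}^n\setminus\{0\}$, so $0\in T$, and $u\in C^{0,\beta}$. Yet $u(p_k)\ge\eta_k(\theta_k^{2s-n}-2)|p_k|^{2s}$, which is $\gg|p_k|^{2s}$ for, e.g., $\eta_k=k^{-2}$ and $\theta_k=k^{-4/(n-2s)}$. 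For $\beta\ge2s$ the bound is immediate from $u(x)=0$. So your argument honestly yields only $u(y)\le C|y-x|^{\gamma}$ with $\gamma=\beta(n+2s)/(n+\beta)<2s$. Your remark on the constant is correct: the scaling $u\mapsto tu$, $M\mapsto tM$ already excludes a $C$ depending only on $n,\lambda,\Lambda,s$.

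For comparison, the paper argues by contradiction at $x_1\in T$. It assumes tangent balls $B_{r_n}(z_n)\subset\{u>0\}$ with $u(z_n)>c\,r_n^{2s}$. It then asserts, by H\"older continuity, that $u\ge c\,r_n^{2s}$ on the disjoint balls $B_{r_n/2}(z_n)$, and sums to get $\int_\Omega u(y)|y-x_1|^{-n-2s}\,\mathrm{d}y=\infty$, contradicting $\mathcal{M}^-[u](x_1)\le M$. The finiteness of that integral is exactly the information you extract. The H\"older step has precisely the mismatch you identified: from $u(z_n)>c\,r_n^{2s}$ and $[u]_\beta$ one only gets $u\ge\frac{c}{2}r_n^{2s}$ on a ball of radius comparable to $r_n^{2s/\beta}\ll r_n$ when $\beta<2s$. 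The resulting lower bounds then form a convergent series, so no contradiction follows. The paper's route therefore does not contain the missing idea either. Your diagnosis is accurate, and a correct version needs input beyond the one-sided inequality, e.g.\ $\beta\ge 2s$, or a two-sided equation with interior $C^{2s}$-type regularity.
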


\begin{proof} Suppose that $u\neq0$ in $\Omega$ and that for some $x_1\in T$ there exist sequences $z_n$ and $r_n\rightarrow0$ such that $x_1\in \partial B_{r_n}(z_n)$, with $B_{r_n}(z_n)\subset\{u>0\}\cap \Omega $ and \[\frac{u(z_n)}{r_n^{2s}}>c>0.\] Moreover, without loss of generality, we may assume the balls $B_{r_n/2}(z_n)$ are disjoint. Since $u\in C^{0,\beta}(\overline{\Omega})$, then for \(y\in B_{r_n/2}(z_n)\) we get $u(y)\geq cr^{2s}.$ Therefore
\begin{equation*}
    \displaystyle\int_{\Omega}\frac{u(y)}{|y-x_1|^{n+2s}}\mathrm{d}y\geq \int_{\cup B_{r_n/2}(z_n) }\frac{u(y)}{|y-x_1|^{n+2s}}\mathrm{d}y\geq \int_{\cup B_{r_n/2}(z_n) }\frac{cr^{2s}}{|y-x_1|^{n+2s}}\mathrm{d}y.
\end{equation*}
Thus
\begin{equation*}
    \int_{\Omega}\frac{u(y)}{|y-x_1|^{n+2s}}\mathrm{d}y\geq \sum_{n} \int_{ B_{r_n/2}(z_n) }\frac{cr^{2s}}{|y-x_1|^{n+2s}}\mathrm{d}y=\infty.
\end{equation*}
On the other hand,
$$
\mathcal{M}^{-}[u](x_1)\leq M,
$$
which leads to a contradiction, and therefore proves the result.
\end{proof}
As a consequence, we obtain the following regularity result in the radial scenario.

\begin{corollary}
Let \( u \in L^{1}_{s}(\mathbb{R}^{n})\cap C^{0,\beta}(\Omega) \). If \( u \geq 0 \) in \( \Omega \) is a radial solution of
$
\mathcal{M}^{-}[u] + V(x) u \leq M$ in $ \Omega,$
then
\[
|u(y)| \leq C |y - x|^{2s}, \quad \text{for } y \in \Omega \text{ and } x \in \partial\{u > 0\} \cap \Omega.
\]
\end{corollary}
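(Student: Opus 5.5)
The corollary will follow from the preceding proposition once we check that, in the radial case, every point of $\partial\{u>0\}\cap\Omega$ lies in the set $T$. That is, each such point is touched from inside $\{u>0\}$ by a ball.

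The plan is to use the radial symmetry $u(y)=\tilde u(|y|)$ to describe $\{u>0\}\cap\Omega$ as a union of annular shells $\{r_1<|y|<r_2\}$ (or a ball $\{|y|<r_2\}$). Here $\{\rho:\tilde u(\rho)>0\}$ is an open subset of the line (by continuity, since $u\in C^{0,\beta}$), hence a countable union of intervals. A point $x\in\partial\{u>0\}\cap\Omega$ has $|x|=\rho_0$, where $\rho_0$ is an endpoint of one of these intervals, say $(r_1,r_2)$ with $\rho_0\in\{r_1,r_2\}$.

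\textbf{Case $\rho_0=r_2$ (outer sphere).} The ball $B_{\varepsilon}\big((1-\varepsilon/\rho_0)x\big)$ is contained in the shell for small $\varepsilon>0$ and touches $\partial B_{\rho_0}$ exactly at $x$.

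\textbf{Case $\rho_0=r_1$ (inner sphere).} Use $B_{\varepsilon}\big((1+\varepsilon/\rho_0)x\big)$ instead. For small $\varepsilon$ it lies in $\{r_1<|y|<r_2\}$: its points satisfy $|y|\ge \rho_0$, with equality only at $x$, and $|y|\le \rho_0+2\varepsilon<r_2$. So $x\in\partial B_\varepsilon\cap\partial\{u>0\}$. The case $\rho_0=0$ cannot occur for a boundary point unless it is isolated, and the $r_1=0$ ball component is handled by the outer case.

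Hence $\partial\{u>0\}\cap\Omega\subset T$. The previous proposition, applied with the same $V$ and $M$, then gives $|u(y)|\le C|y-x|^{2s}$ for $y\in\Omega$ and $x\in T$. This is the claim.

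The main obstacle is degenerate configurations. These are accumulation points of infinitely many shells, where $\rho_0$ is a limit of intervals but not an endpoint of any of them. At such a point no ball inside $\{u>0\}$ touches $x$ at a fixed scale. To handle them, I would approximate $x$ by endpoints $x_k$ of shells, where the estimate holds with the uniform constant $C$, and pass to the limit $|u(y)|\le C|y-x_k|^{2s}\to C|y-x|^{2s}$. This uses continuity in $x$ and the fact that $C$ depends only on $n,\lambda,\Lambda,s$.

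A secondary point is that the proposition assumes $u\in C^{0,\beta}(\overline\Omega)$, whereas here $u\in C^{0,\beta}(\Omega)$. I would apply it on a slightly smaller radial subdomain $\Omega'\Subset\Omega$ containing the relevant points, and then exhaust $\Omega$ by such subdomains.
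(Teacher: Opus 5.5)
Your argument is correct and follows the route the paper intends. The paper gives no separate proof; it only says the corollary follows from the preceding proposition, because in the radial case $\{u>0\}$ is a union of shells, so free-boundary points are touched from inside by balls and therefore lie in $T$. Your approximation step for points where shells accumulate (using the uniform constant) is more careful than the paper, which tacitly assumes every such point is a shell endpoint; note also that the origin \emph{can} be an isolated zero, but then it lies in $T$ via any ball $B_\varepsilon(\varepsilon e)$ with $|e|=1$, so that case is harmless.
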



\section{Strong Maximum principle for sublinear equations}\label{secciondeadcore}

In this section we study the validity of the strong maximum principle (SMP)  for the sublinear problem
\begin{equation*}\label{eq:problem}\tag{$P_{a,q}$}
\begin{cases}
    \mathcal{M}^{-}[u](x) + a(x)u^{q}(x) = 0 & \text{ in } \Omega, \\ 
    u\geq 0 & \text{ in }\Omega, \\
     u \leq 0 & \text{ in } \mathbb{R}^{n}\backslash\Omega, \\
\end{cases}
\end{equation*}
where \(\Omega\subset\mathbb{R}^n\) is a bounded smooth domain, with $s, q\in(0,1)$ and $a\in C(\overline{\Omega})$ is a possible sign-changing function with $\Omega^+_a\neq \emptyset$. 
Here, $\Omega^+_a=\{x\in \Omega: a(x)>0\}$. 
To fix the ideas we will be considering  the extremal operator $\mathcal{M}^{-}$, but everything can be performed analogously for  
$\mathcal{M}^{+}$ instead.

We start the section with a localizing lemma that holds in both superlinear and sublinear scenarios. It establishes that every nontrivial solution of \eqref{eq:problem} assumes its positive maximum in some component of $\Omega^{+}_a$.

\begin{lemma}\label{lemma 5.1}
    Let \(u\) be a nontrivial solution of \eqref{eq:problem} with $q>0$. If $u\leq 0$ in $\mathbb{R}^n\setminus \Omega$, then a maximum point of $u$ where  \(u(x_0)=\max_{\bar\Omega} u\) can only be reached at a point \(x_0\in\Omega^{+}_a\).
\end{lemma}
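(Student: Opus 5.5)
Since $u$ is nontrivial and $u\ge 0$ in $\Omega$, the maximum $M:=\max_{\overline\Omega}u$ is positive. The plan is to prove two things: a maximum point $x_0$ with $u(x_0)=M$ cannot lie outside $\Omega$, and at such a point one must have $a(x_0)>0$.

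The first step is to locate $x_0$ in $\Omega$. On $\partial\Omega\subset\mathbb{R}^n\setminus\Omega$ we have $u\le 0$, so $x_0\notin\partial\Omega$. Combining $u\le 0$ in $\mathbb{R}^n\setminus\Omega$ with $u\le M$ in $\overline\Omega$, we get $u\le M=u(x_0)$ on all of $\mathbb{R}^n$. Thus $x_0$ is a global maximum point lying in $\Omega$.

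The second step uses the equation at $x_0$ in the viscosity sense. The constant function $\phi\equiv M$ is admissible for the subsolution property: it is $C^2$ near $x_0$, lies in $L^1_s$, equals $u$ at $x_0$, and satisfies $\phi\ge u$ everywhere. Since $u$ is a subsolution of $\mathcal{M}^-[u]\ge -a u^q$, we obtain $0=\mathcal{M}^-[\phi](x_0)\ge -a(x_0)M^q$, hence $a(x_0)\ge 0$. This alone does not exclude $a(x_0)=0$, so a strict inequality is needed.

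The third step, and the main obstacle, is to rule out $a(x_0)=0$. I would use a test function that sees the nonlocal tail: set $\phi=M$ in a small ball $B_\varepsilon(x_0)\subset\Omega$ and $\phi=\max\{u,\,M-\eta\}$ outside that ball, suitably smoothed so that $\phi\ge u$ and $\phi\in C^2$ near $x_0$. Because $u\le 0<M$ on the set $\mathbb{R}^n\setminus\Omega$, which has positive measure, we can take $\eta<M$. Then $\phi-M$ is nonpositive everywhere and strictly negative on a set of positive measure, so
\[
\mathcal{M}^-[\phi](x_0)\le \lambda\, C_{n,s}\int_{\mathbb{R}^n}\frac{2(\phi(x_0+y)-M)}{|y|^{n+2s}}\,\mathrm{d}y<0 .
\]
In this bound I use that every $L\in\mathcal L_*$ has its integrand $\le 0$ and kernel $\ge\lambda$, after symmetrizing with $\phi(x_0\pm y)$. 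The subsolution property then gives $-a(x_0)M^q\le\mathcal{M}^-[\phi](x_0)<0$, so $a(x_0)>0$, which means $x_0\in\Omega^+_a$.

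The delicate points are the construction and regularity of $\phi$, since $u$ is only continuous. To avoid them, it is simpler to take $\phi=M-\eta\chi$, where $\chi\in C^\infty$, $0\le\chi\le1$, $\chi=0$ on $B_\varepsilon(x_0)$, and $\chi=1$ on a fixed ball $B\subset\mathbb{R}^n\setminus\overline\Omega$ (possible since $\Omega$ is bounded). We then need $\phi\ge u$, which fails only where $u>M-\eta\chi$. To handle this, I would instead take $\chi$ supported in $\mathbb{R}^n\setminus\Omega$, where $u\le0$, choose $\eta\le M$, and keep $\phi=M$ on $\Omega$. Then $\phi\ge u$ everywhere and $\phi\equiv M$ near $x_0$, so the displayed strict negativity follows directly.
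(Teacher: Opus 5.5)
Your argument is correct and is essentially the paper's. You touch $u$ from above at the interior maximum point by an admissible function equal to $M$ near $x_0$ and lying below $M$ everywhere. You use $u\le 0<M$ on $\mathbb{R}^n\setminus\Omega$ to force $\mathcal{M}^-[\phi](x_0)<0$, and then the subsolution inequality $\mathcal{M}^-[\phi](x_0)\ge -a(x_0)M^q$ gives $a(x_0)>0$; the paper phrases this last step as a contradiction with $a(x_0)\le 0$.

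The only difference is the test function. The paper takes $\phi=u(x_0)$ on a small ball $B_r(x_0)\subset\Omega$ and $\phi=u$ elsewhere. This is already admissible because Definition~\ref{def-visc} requires $C^2$ regularity only near the touching point, so your smoothing concerns and the separate Step~2 are unnecessary. Your cutoff construction works equally well.
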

 \begin{proof}
  Assume by contradiction that \(u(x_1) = \max_{\bar\Omega } u\)  with $a(x_1)\leq 0$. Since \(u\) is nonnegative in \(\Omega\), it follows that $\mathcal{M}^{-}[u]\geq 0$ when $a\leq 0$ and \(u\geq 0\) in \(\Omega\). Now consider $r>0$ small such that $B_r(x_1)\subset \Omega$ and define
\begin{equation*}
    \varphi = \begin{cases}
    u(x_1) & \text{ in } B_r(x_1) \\ 
    u & \text{ in } \mathbb{R}^{n}\backslash B_r(x_1).
    \end{cases}
\end{equation*}
Clearly, \((u-\varphi)(x) \leq 0\), since \(u(x)\) reaches its maximum at \(x_1\in \Omega\), so that it satisfies \(\mathcal{M}^{-}\varphi(x_1)\geq 0\). Therefore
\begin{align*}
    0\leq\mathcal{M}^{-}[\varphi](x_1) &\leq 2\Lambda C_{n,s}\int_{\mathbb{R}^n}\frac{\varphi(x) - \varphi(x_1)}{|y-x_1|^{n+2s}}\mathrm{d}y \\
    &= 2\Lambda C_{n,s}\int_{\mathbb{R}^n \backslash\Omega}\frac{\varphi(x) - \varphi(x_1)}{|y-x_1|^{n+2s}}\mathrm{d}y + 2\Lambda C_{n,s}\int_{\Omega}\frac{\varphi(x) - \varphi(x_1)}{|y-x_1|^{n+2s}}\mathrm{d}y \\
    & \leq2\Lambda C_{n,s}\int_{\Omega}\frac{\varphi(x) - \varphi(x_1)}{|y-x_1|^{n+2s}}\mathrm{d}y \\
    &= 2\Lambda C_{n,s}\int_{\Omega\setminus B_r(x_1)}\frac{u(y) - u(x_1)}{|y-x_1|^{n+2s}}\mathrm{d}y  <0,
\end{align*}
where the third inequality is due to $u\leq 0$ on $\R^n\setminus \Omega$ and the last one is because $u$ is not constant ($u\leq 0$ on $\partial \Omega$), which in turn leads to a contradiction. This proves the result.
\end{proof}

\begin{lemma}\label{limitação abaixo a(x_0) sublinear}
If $0<q<1$ and $u\le 0 $ in $\mathbb{R}^n\setminus \Omega$, then
    $$
    a(x_0)\geq C_{n,s}\Lambda u(x_0)^{1-q},
    $$
 where $u(x_0)=\max_{\bar\Omega} u$, for some positive constant $C_{n,s}$.  
\end{lemma}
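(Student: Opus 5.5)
The plan is to test the equation at the maximum point $x_0$ from above, in the spirit of the proof of Lemma~\ref{lemma 5.1}, and to use the exterior condition $u\le 0$ in $\mathbb{R}^n\setminus\Omega$ to obtain a quantitative negative bound for the nonlocal term at $x_0$. We may assume $u$ is nontrivial, so $u(x_0)=\max_{\bar\Omega}u>0$. If $u(x_0)=0$ the claimed inequality reduces to $a(x_0)\ge 0$, and this case is excluded by Lemma~\ref{lemma 5.1}, which also gives $x_0\in\Omega^+_a$. In particular $x_0\in\Omega$.

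First we build the test function. Fix $r>0$ with $B_r(x_0)\subset\Omega$. Define $\varphi=u(x_0)$ in $B_r(x_0)$ and $\varphi=u$ in $\mathbb{R}^n\setminus B_r(x_0)$. This $\varphi$ is $C^2$ in a neighborhood of $x_0$ and belongs to $L^1_s(\mathbb{R}^n)$. It touches $u$ from above at $x_0$: inside $\Omega$ we have $u\le u(x_0)$ because $x_0$ is a maximum point, and outside $\Omega$ we have $u\le 0<u(x_0)$. Since $u$ is a viscosity subsolution of $\mathcal{M}^-[u]=-a(x)u^q$ (Definition~\ref{def-visc}),
\[
\mathcal{M}^-[\varphi](x_0)\ \ge\ -a(x_0)\,u(x_0)^q .
\]

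Second, we bound $\mathcal{M}^-[\varphi](x_0)$ from above. Because $\mathcal{M}^-$ is an infimum over $\mathcal{L}_*$, we have $\mathcal{M}^-[\varphi](x_0)\le L\varphi(x_0)$ for any fixed $L$, and we take the kernel with $\mu\equiv\lambda$ (any fixed admissible $\mu$ works). Since $\varphi\le\varphi(x_0)=u(x_0)$ on all of $\mathbb{R}^n$, every second difference $\varphi(x_0+y)+\varphi(x_0-y)-2\varphi(x_0)$ is nonpositive. We may therefore discard the part of the integral where $x_0\pm y\in\Omega$. On the remaining region, where $x_0+y\notin\Omega$ or $x_0-y\notin\Omega$, the corresponding term is at most $-u(x_0)$. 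With $D=\operatorname{diam}\Omega$, the set $\mathbb{R}^n\setminus\Omega$ contains $\mathbb{R}^n\setminus B_D(x_0)$, and this gives
\[
\mathcal{M}^-[\varphi](x_0)\ \le\ -\lambda C_{n,s}\,u(x_0)\int_{|y|>D}\frac{\mathrm{d}y}{|y|^{n+2s}}\ =\ -c_0\,u(x_0),\qquad c_0=c(n,s,\lambda)\,D^{-2s}>0 .
\]

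Combining the two inequalities yields $a(x_0)u(x_0)^q\ge c_0\,u(x_0)$. Dividing by $u(x_0)^q>0$ gives $a(x_0)\ge c_0\,u(x_0)^{1-q}$, which is the claim once the constant is renamed. The resulting constant depends on $n$, $s$, the ellipticity constant and $\operatorname{diam}\Omega$; in the statement this dependence is absorbed into the positive constant written $C_{n,s}$.

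The only delicate point is the validity of the test function. The function $\varphi$ is only $C^2$ near $x_0$ and is merely integrable elsewhere, but this is exactly what Definition~\ref{def-visc} permits, so the subsolution inequality may be applied to it. The remaining steps are pointwise sign considerations together with one explicit integral of the kernel outside a ball.
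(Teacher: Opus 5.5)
Your proof is correct and follows the same route as the paper: evaluate the equation at the interior maximum point $x_0$, note that every second difference is nonpositive there because $x_0$ is a global maximum, and keep only the exterior region where $u\le 0$ (your $\{|y|>D\}$, the paper's $\mathbb{R}^n\setminus B_R(x_0)$ with $\Omega\Subset B_R(x_0)$) to get $a(x_0)u(x_0)^q\ge c\,u(x_0)$. The only differences are that you justify the pointwise evaluation through the test function $\varphi$ from the proof of Lemma~\ref{lemma 5.1}, whereas the paper evaluates $\mathcal{M}^-[u](x_0)$ directly, and that taking $\mu\equiv\Lambda$ instead of $\mu\equiv\lambda$ (allowed since $-\mathcal{M}^-=\sup_{L}(-L)$) recovers the factor $\Lambda$ written in the statement.
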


\begin{proof}
Set $u(x_0)=\max_{\bar\Omega} u$, then 
    $$
    a(x_0)u(x_0)^{q}=-\mathcal{M}^{-}[u](x_0)=  -\inf_{\mu} C_{n,s}\int_{\mathbb{R}^n}( u(x+y)+u(x-y)-2u(x_0) )\mu\left(\frac{y}{|y|}\right)\frac{\mathrm{d}y}{|y|^{n+2s}}.
    $$
    Let $\Omega\subset\subset B_R(x_0)$ for some large $R$, then
    $$
    a(x_0)u(x_0)^{q}\geq 2C_{n,s}\Lambda\int_{\mathbb{R}^n\setminus \Omega}\frac{u(x_0)-u(y)}{|y-x_0|^{n+2s}}dy\geq  2C_{n,s}\Lambda  \left(u(x_0)\int_{\mathbb{R}^n\setminus B_R(x_0)} \frac{\mathrm{d}y}{|y-x_0|^{n+2s}}
    - \int_{\mathbb{R}^n\setminus \Omega}\frac{u^+(y)}{|y-x_0|^{n+2s}}\mathrm{d}y\right) .
    $$
Thus, since $u\le 0 $ in $\mathbb{R}^n\setminus \Omega$ and by setting $C_n=C_n(x_0)=\int_{\mathbb{R}^n\setminus B_R(x_0)} \frac{\mathrm{d}y}{|y-x_0|^{n+2s}}$ we have
   $$
    u(x_0)^q(a(x_0)-C_{n,s}\Lambda u(x_0)^{1-q})\geq 0
    $$ 
and hence 
$
    a(x_0)\geq C_{n,s}\Lambda u(x_0)^{1-q}
    $
    as desired.
\end{proof}

\begin{rem}\label{Lemma 1 superlinear}
In the case of a superlinear problem, that is if we had $q>1$, then one would get instead
   \begin{center} 
   $
    \sup_\Omega u\geq \left(\frac{C}{\|a\|_{\infty}}\right)^{\frac{1}{q-1}},
    $\quad 
    where $C(n,s)>0$.
    \end{center}
    Moreover,   
    \begin{center}
    $
    a(x_0)\geq \frac{C_n}{(u(x_0))^{q-1}},\quad
    $
where $u(x_0)=\max_{\bar\Omega} u$.
\end{center}
Indeed, in the proof of the previous lemma one rewrites $$
     u(x_0)(a(x_0)u(x_0)^{q-1}-C_n )\geq 0,
    $$
from which $a(x_0)u(x_0)^{q-1}-C_n\geq 0$ and the two affirmations are verified.
\end{rem}

\begin{rem}
The hypothesis $u\le 0 $ in $\mathbb{R}^n\setminus \Omega$ can be replaced by $$
\int_{\mathbb{R}^n\setminus \Omega}\frac{u(y)}{|y-x_0|^{n+2s}} \mathrm{d}y\leq 0 \textrm{ for all } x_0\in \Omega.
$$
\end{rem}

Our goal is to show that SMP holds when $u^-\neq 0$ has small $L^1_s$ norm, and that it fails when \(\|u^-\|_{L^1_s}\) is large. Since the arguments used to establish the main results of this section rely on compactness, it is first necessary to prove the following lemma, which ensures that the maxima of nontrivial solutions to \eqref{eq:problem} do not degenerate to zero in the limit.

\begin{lemma}\label{Lemma 1 sublinear} Let $\Omega$ be a bounded convex domain.
    Let \(u\) be a nontrivial solution of \eqref{eq:problem} with $0<q<1$ , $u\le 0 $ in $\mathbb{R}^n\setminus \Omega$ and
    $$
    |u(x)|\leq C\sup_\Omega u(1+|x|)^{1+\alpha} \mbox{ in }\mathbb{R}^n.
    $$
    Then
   $
    \sup_\Omega u\geq c>0,
    $
where $c(n,s)>0$.
    
\end{lemma}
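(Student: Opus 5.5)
We argue by contradiction through a blow-up and compactness procedure, as announced before the statement ("uniform bounds from below of the maximum of nontrivial solutions through Liouville theorems"). Suppose there is a sequence of nontrivial solutions $u_k$ of \eqref{eq:problem} with $u_k\le 0$ in $\mathbb{R}^n\setminus\Omega$ and the growth bound. Suppose also that $M_k:=\sup_\Omega u_k=u_k(x_k)\to 0$. By Lemma~\ref{lemma 5.1} the points $x_k$ lie in $\Omega^+_a$. Lemma~\ref{limitação abaixo a(x_0) sublinear} only gives $a(x_k)\ge C M_k^{1-q}$, which degenerates as $M_k\to0$, so it does not conclude by itself. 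The sublinearity is what drives the argument: relative to $M_k$, the reaction term $a u_k^q\sim M_k^q$ becomes huge as $M_k\to 0$, and this forces a rescaling.

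\textbf{Rescaling.} We set $r_k:=M_k^{(1-q)/(2s)}\to 0$ and
\[
v_k(y):=\frac{u_k(x_k+r_k y)}{M_k}.
\]
By the scaling computation already done in the proof of Theorem~\ref{thm1}, $v_k$ solves
\[
\mathcal{M}^-[v_k]+a(x_k+r_ky)v_k^q=0
\]
in $\Omega_k:=(\Omega-x_k)/r_k$, with $v_k\le 1$ in $\Omega_k$, $v_k(0)=1$, and $v_k\le 0$ outside $\Omega_k$. The hypothesis $|u_k|\le CM_k(1+|x|)^{1+\alpha}$ translates into $|v_k(y)|\le C(1+r_k|y|)^{1+\alpha}$. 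This growth is uniform for $|y|\lesssim 1/r_k$. The important requirement is that $1+\alpha<2s$, or at least that the growth is integrable against $|y|^{-n-2s}$, so that the tails are controlled uniformly in $L^1_s$. I would also check how $\alpha$ must be restricted here.

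\textbf{Compactness.} The right-hand sides are bounded by $\|a\|_\infty$ and the $L^1_s$ tails are uniformly bounded. Interior H\"older estimates for $\mathcal{M}^-$ (Caffarelli--Silvestre type), together with boundary H\"older estimates near $\partial\Omega_k$ (using convexity and $C^{1,1}$, which flatten after rescaling), then give local uniform convergence $v_k\to v$ along a subsequence. Stability of viscosity solutions passes the equation to the limit. Up to a subsequence, $x_k\to\bar x$ and $\mathrm{dist}(x_k,\partial\Omega)/r_k\to d\in[0,\infty]$. If $d=\infty$, $\Omega_k\to\mathbb{R}^n$. If $d<\infty$, convexity makes $\Omega_k$ converge to a half-space $H$ containing $B_d$; the case $d=0$ is excluded by boundary H\"older continuity, since $v_k(0)=1$ while $v_k\le 0$ on $\partial\Omega_k$. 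The limit therefore solves
\[
\mathcal{M}^-[v]+a(\bar x)v^q=0
\]
in $\mathbb{R}^n$ or in $H$, with $0\le v\le 1$ in the domain, $v(0)=1=\max v$, $v\le 0$ outside, and at most polynomial growth.

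\textbf{Contradiction.} If $a(\bar x)\le 0$, then $v$ is a bounded subsolution $\mathcal{M}^-[v]\ge 0$ attaining an interior maximum, which contradicts the integral argument of Lemma~\ref{lemma 5.1}. This includes the whole-space case, where constancy contradicts nothing directly, so the convexity and boundary analysis must be used there. If $a(\bar x)>0$, I would invoke a Liouville theorem for $\mathcal{M}^-[v]+c v^q=0$ with $v\ge0$ bounded in $\mathbb{R}^n$ or in a half-space with $v\le0$ outside. In the whole space, evaluating at the maximum point gives $c=-\mathcal{M}^-[v](0)\cdot 1\ge 0$, so a more refined tool is needed. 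One option is comparison with the scaled barrier of Lemma~\ref{supersolucion} in large balls, giving $v\ge$ a constant everywhere, and then the equation at an infimum point yields a contradiction. In the half-space case, the exterior nonpositivity gives
\[
-\mathcal{M}^-[v](0)\ge c_0\int_{H^c}|y|^{-n-2s}\,\mathrm{d}y,
\]
which is a fixed positive quantity, and balancing this against $a(\bar x)$ via Lemma~\ref{limitação abaixo a(x_0) sublinear} in rescaled form is the route I would try first.

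\textbf{Main obstacle.} The hardest step is this Liouville step, together with the bookkeeping showing that the choice of $r_k$ keeps the rescaled problem nondegenerate. If the Liouville step fails, an alternative is to choose the scaling so that $a(x_k)M_k^{q-1}r_k^{2s}$ is normalized. The second delicate point is the uniform tail control, which is exactly where the growth hypothesis enters.
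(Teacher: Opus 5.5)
Your outline is the paper's own route, but it does not close at the decisive step. The shared ingredients are the scale $\mu_k=M_k^{(1-q)/(2s)}$, $L^1_s$-compactness from the growth bound (you are right that $1+\alpha<2s$ is needed for the tails), stability, the strong maximum principle, and a sublinear Liouville theorem in the limit.

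The gap is that nothing excludes a whole-space limit with $a(\bar x)=0$ and $v\equiv 1$. This function satisfies $0\le v\le 1=v(0)$ and $\mathcal{M}^-[v]=0=-a(\bar x)v^q$, so there is nothing to contradict. Your assertion that $a(\bar x)\le 0$ ``contradicts the integral argument of Lemma~\ref{lemma 5.1}'' fails in $\mathbb{R}^n$, as you half concede. That argument needs $v\le 0$ on a set at bounded distance from the maximum point, and this is lost as $\Omega_k\to\mathbb{R}^n$.

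Convexity does not repair it either. Evaluating the equation at $x_k$, and using that $\mathbb{R}^n\setminus\Omega$ contains a half-space at distance $d_k=\mathrm{dist}(x_k,\partial\Omega)$, only gives $a(x_k)\ge c\,(\mu_k/d_k)^{2s}$. This tends to $0$ exactly in the whole-space regime. The missing ingredient is a scale-invariant bound $\liminf_k a(x_k)>0$.

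The paper gets this bound by applying $a(x_0)\ge C_{n,s}\Lambda u(x_0)^{1-q}$ to $v_k$ on $\Omega_k$, writing $a(x_k)=a_k(0)\ge C_nv_k(0)^{1-q}=C_n$. It also asserts from convexity that $d_k/\mu_k\to\infty$, so that only a whole-space Liouville theorem for positive supersolutions (Felmer--Quaas type) is needed. Your own objection applies to that step as written, however. The constant there is $\int_{\mathbb{R}^n\setminus B_R}|y|^{-n-2s}\,\mathrm{d}y\sim R^{-2s}$ with $R\sim\mu_k^{-1}$ for $\Omega_k$, so it just returns $a(x_k)\ge cM_k^{1-q}$. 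You therefore cannot borrow it. You need a genuinely new argument for $\liminf_k a(x_k)>0$, or a finer blow-up near zeros of $a$.

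The half-space branch is also left open. What your ``balancing'' produces is $a(\bar x)\ge c\,d^{-2s}$, which is consistent rather than contradictory. It does show $a(\bar x)>0$, and together with $a\le\|a\|_\infty$ it gives $d_k/\mu_k\ge (c/\|a\|_\infty)^{1/(2s)}$. That is the clean way to exclude $d=0$; boundary H\"older estimates would need regularity of the exterior data, which you do not have.

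Beyond that you would need a Liouville theorem for $\mathcal{M}^-[v]+cv^q=0$ in a half-space $H$ with $v\le 0$, not $v\ge 0$, outside $H$. This is not the setting of the usual Liouville theorems for supersolutions in cones. Passing to $v^+$ does not reduce to it, since that increases $\mathcal{M}^-[v]$ in $H$ and can destroy the supersolution inequality.

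Finally, in the whole-space case with $a(\bar x)>0$, your barrier comparison gives no uniform positive lower bound on $v$. The scaled barriers are proportional to $\inf_{B_{r/2}}v$, which may degenerate as $r\to\infty$. That Liouville step should simply be quoted from the literature, as the paper does.
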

\begin{proof}
Assume by contradiction that 
$M_k=\|u_k\|_{L^\infty(\Omega)}\rightarrow 0$. 
By continuity, there exists $\left(x_k\right)_{k \in \mathbb{N}} \in \Omega$ such that $u\left(x_k\right)=M_k$. We define $\mu_k=M_k^{-(q-1) / 2 s} \rightarrow 0, \Omega_k=\left\{y \in \mathbb{R}^N: x_k+\mu_k y \in \Omega\right\}$ and

$$
v_k(y)=M_k^{-1} u_k\left(x_k+\mu_k y\right).
$$

By the boundedness of $\Omega$, we can pass to a subsequence to obtain $x_k \rightarrow x_0 \in \bar{\Omega}$. Recalling that  $\Omega$ is convex, by denoting $d(x_k)=\operatorname{dist}(x_k, \partial \Omega)$, we have $d(x_k) / \mu_k \rightarrow+\infty$, which means $x_0 \in \Omega$ and  $\Omega^k \rightarrow \mathbb{R}^n$.
Here, $\Omega^k \rightarrow \mathbb{R}^n$ in the sense that for every $k, B_{d\left(x_k\right) / \mu_k} \subset \Omega^k$, and $B_{d\left(x_k\right) / \mu_k} \nearrow \mathbb{R}^n$.
Then, $v_k$ is a viscosity solution to

$$
\begin{cases}\mathcal{M}^{-}[v_k] + a_{k}v_{k}^q=0 & \text { in } \Omega^k \\ v_k\geq 0 & \text { in } \Omega^k\end{cases}
$$
with $a_k(y)=a(x_k+\mu_k y)$ and $v_k \leq 1, v_k(0)=1$.  Moreover, by Lemma \ref{limitação abaixo a(x_0) sublinear}, 
$$
a(x_k)=a_k(0)\geq C_nv_k(0)^{1-q}=C_n.
$$  

We next prove that, up to a subsequence, $v_k \rightarrow v$ locally uniformly in $\mathbb{R}^n$.
Indeed, fix a compact $K \subset \mathbb{R}^n$ and a smooth domain $U$ such that $K \subset \subset U$. For some $k_0, K \subset \subset U \subset \subset \Omega^k$ for any $k \geq k_0$, so we have
$$
\mathcal{M}^{-}[v_k] + a_{k}v_{k}^q=0 \text { in } U
$$
in the viscosity sense. 
Besides,
$
\|v_k\|_{L^1_s} \le C\,\, \|u_k\|_{L^1_s}.$
We have $v_k \in C^\beta(K)$ and $\left\|v_k\right\|_{C^\beta(K)} \leq C_0(\left\|v_k\right\|_{L^{\infty}(U)}+\|v_k\|_{L^1_s}) \leq 2C_0$ by interior regularity estimates. 
 Then there exists a subsequence $\left(v_k\right)$ such that $v_k \rightarrow v$ in $C(K)$ by the compact embedding of $C^\beta(K)$ into $C(K)$. Therefore $v_k \rightarrow v$ pointwise and as  
$$
|v_k(x)|\leq C(1+|x|)^{1+\alpha},
$$
 the dominated convergence theorem gives us $v_k \rightarrow v$ in $L_{s}^1\left(\mathbb{R}^n\right)$.

Finally, by stability of viscosity solutions under $C_{\text {loc }}+L_{s}^1\left(\mathbb{R}^n\right)$ limits, $v$ is a viscosity solution of
$$
\mathcal{M}^{-}[v] + a(x_0)v^p=0 \text { in } \mathbb{R}^n.
$$
By Lemma \ref{lemma 5.1} the maximum of $u_k$ can only be reached at a point where $a>0$ and $a(x_k)\geq C_n$, then $a(x_0)> 0$ and we obtain that $\mathcal{M}^{-}[v]\leq 0$, also $v(0)=1$. Since $v_k\geq 0$ in $\Omega_k$ we have that $v\geq 0$ in $\mathbb{R}^n$, therefore, by SMP, $v>0$. Then a Liouville theorem applies, which follows in the sublinear case along the same lines as in \cite {FQ2011}, as carried out in \cite{NPQ2024} for conical domains. That is, there exists no positive supersolutions to the equation if $0<q<1$. In particular, there exists no positive solution, and we are done.
\end{proof}

\begin{rem}\label{blow-up} In the case of a superlinear problem,  \(\| u_{k}\|_{L^\infty(\Omega)} \leq C\), for all \(k\), whenever $\Omega \subset \mathbb{R}^N$ is a bounded convex domain of class $C^2$, and $q \in\left(1, q^*\right)$, for some $q^*$.

Indeed, if by contradiction $M_k=\|u_k\|_{L^\infty(\Omega)}\rightarrow \infty$, by continuity there exists $\left(x_k\right)_{k \in \mathbb{N}} \in \Omega$ such that $u\left(x_k\right)=M_k$. We define $\mu_k=M_k^{-(q-1) / 2 s} \rightarrow 0, \Omega_k=\left\{y \in \mathbb{R}^n: x_k+\mu_k y \in \Omega\right\}$ and
$
v_k(y)=M_k^{-1} u_k\left(x_k+\mu_k y\right).
$
Then $v_k$ is a viscosity solution to
$$
\begin{cases}\mathcal{M}^{-} [v_k] + a_{k}v_{k}^q=0 & \text { in } \Omega^k \\ v_k\geq 0 & \text { in } \Omega^k\end{cases}
$$
with $a_k(y)=a(x_k+\mu_k y)$ and $v_k \leq 1$ in $\Omega_k$, $ v_k(0)=1$. Also, by Lemma~\ref{Lemma 1 superlinear},
$$
a(x_k)=a_k(0)\geq \frac{C_n}{(v_k(0))^{q-1}}=C_n.
$$ 
As before, one proves that, up to a subsequence, $v_k \rightarrow v$ locally uniformly in $\mathbb{R}^n$, where
 $v$ is a viscosity solution of
$
\mathcal{M}^{-}[v] + a(x_0)v^p=0 \text { in } \mathbb{R}^n.
$
By Remark \ref{Lemma 1 superlinear}, the maximum of $u_k$ can only be reached at a point where $a>0$ and $a(x_k)>C_n>0$ then $a(x_0)> 0$ and we obtain that $\mathcal{M}^{-} [v]\leq 0$, $v(0)=1$. Since $v_k\geq 0$ in $\Omega_k$ we have that $v\geq 0$ in $\mathbb{R}^n$, therefore, by SMP, $v>0$. By the Liouville result in \cite{FQ2011}, there exists a critical exponent $q^*>1$ for which there exists no positive supersolutions to the equation if $1<q<q^*$.
\end{rem}

Now we are in a position of proving the main result of the paper.

\begin{proof}[Proof of Theorem \ref{teo1}]
 Assume, by contradiction, that \eqref{eq:problem} admits a sequence of viscosity solutions \(u_k\) such that \(u_k\notin\mathcal{P}^{0}\) for every \(k\), with $\|u_k^{-}\|_{L^{1}_{s}}\rightarrow 0$.

First, suppose that \(\| u_{k}\|_{\infty} \leq C\), for all \(k\). By \(C^{1,\alpha}\) regularity estimates (see \cite{birindelli2014}, \cite{imbert2013c1}), we obtain \(u_{k} \to u_{0}\) locally uniformly in \(C^{1,\alpha}(\Omega)\). By Lemma~\ref{Lemma 1 sublinear}, we know that there exists $c>0$ such that, for every \(k\in\mathbb{N}\), \[\sup_\Omega u_k \geq c >0 .\]
Therefore, we conclude that $u_0\geq 0$ and it is nontrivial. By stability (see \cite{caffarelli2011regularity}) we have 

\begin{equation*}
    \begin{cases}
    \mathcal{M}^{-}[u_0] \leq 0 & \text{ in } \Omega, \\ 
    u_0\geq0 & \text{ on }\Omega,
    \end{cases}
\end{equation*}
so $u_0>0$ in $\Omega$. As $\|u^-\|_{L^1_s}= 0$ and $\dfrac{\alpha _{r}}{r^{2s}}>0$ for every $x_0\in \partial\Omega$, we can apply Theorem~\ref{thm1} to ensure that
\begin{equation}
        \liminf_{x\to x_0, x\in\Omega}\frac{u(x)}{|x-x_{0}|^{s}}>0.
\end{equation}
On the other hand, we know that (see \cite[Proposition~1.1]{ros2016boundary})

\begin{center}
    $\frac{|u_k|}{d^{s}}\to \frac{|u_0|}{d^{s}}\;\text{ in $\Omega$ for \(k\) large enough,}$
\end{center}
where $d=\mathrm{dist}(x,\partial \Omega)$. Therefore, 
\begin{equation}
        \liminf_{x\to x_0, x\in\Omega}\frac{u_k(x)}{|x-x_{0}|^{s}}>0.
\end{equation}
Furthermore, since $u_k>0$ for sufficiently large $k$, it follows that $u_k\in\mathcal{P}^{0}(\Omega)$, which contradicts our assumption. Hence, the result follows.

If, otherwise, $\| u_{k}\|_{\infty} \rightarrow\infty$, we consider $v_k=\frac{u_k}{\| u_{k}\|_{\infty}}$, so $\| v_{k}\|_{\infty}= 1$ and
 \begin{equation*}
    \begin{cases}
    \mathcal{M}^{-}[v_k] =\| u_{k}\|_{\infty}^{q-1}av_k^q & \text{ in } \Omega, \\ 
    v_k\geq0 & \text{ on }\Omega,
    \end{cases}
\end{equation*}
so we obtain \(v_{k} \to v_{0}\) locally uniformly in \(C^{1,\alpha}(\Omega)\), therefore
 \begin{equation*}
    \begin{cases}
    \mathcal{M}^{-}[v_0] =0 & \text{ in } \Omega, \\ 
    v_k\geq0 & \text{ on }\Omega,
    \end{cases}
\end{equation*}
and the proof follows as in the first case.
\end{proof}

\begin{rem}
Although SMP in the superlinear regime follows from the sublinear one, a direct proof can be applied exactly as in the sublinear case, by replacing  Lemma~\ref{Lemma 1 sublinear} by Remark~\ref{Lemma 1 superlinear}.
\end{rem}

\begin{proof}[Proof of Theorem \ref{teo2}]
By following the same steps of the proof of the Theorem \ref{teo1}, we conclude the same result without assuming the exterior sign condition on $u$ whenever $a$ has a positive sign in $\Omega$. Indeed, Lemma \ref{lemma 5.1} holds directly and  $a(x)>c>0$ for $x\in \Omega'\Subset \Omega$, therefore we have a similar result of Lemma \ref{limitação abaixo a(x_0) sublinear}. This proves Theorem \ref{teo2}.
\end{proof}


\section{Existence of nontrivial solutions and a solution that violates SMP}

We start the section with presenting and proving an existence result as far as $a^+\not\equiv 0$. Next we will show that SMP fails when \(\|u^-\|_{L^1_s}\) is large.

We first recall the following result concerning the existence of the principal eigenvalue in general domains for  nonlocal elliptic equations, whose proof follows for instance from Theorem 1.1 in \cite{davila2023harnack}.

\begin{proposition}\label{Erwin}
    Let \(\Omega\subset\mathbb{R}^{n}\), \(s\in(0,1)\) and \(\mathcal{I}=\mathcal{M}^{\pm}\). Then there exists an eigenpair \((\phi^{+},\lambda^{+})\) with \(\phi^{+}>0\) in \(\Omega\), solving the problem 
    \begin{equation*}
    \begin{cases}
    \mathcal{I}(\phi)  = -\lambda\phi & \text{ in } \Omega, \\ 
    \phi = 0 & \text{ in }\mathbb{R}^{n}\backslash\Omega,
    \end{cases}
    \end{equation*}
    in the viscosity sense. The principal eigenvalue \(\lambda^{+}\geq 0\) is characterized by
    \begin{equation*}
        \lambda^{+} = \sup\{\lambda:\exists\phi>0\text{ in }\Omega, \phi\geq0\text{ in }\mathbb{R}^{n},\text{ s.t. }\mathcal{I}\phi  \leq -\lambda\phi\text{ in }\Omega\}.
    \end{equation*}
\end{proposition}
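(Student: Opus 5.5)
The plan is to obtain the eigenpair by the Krein--Rutman type scheme used in \cite{davila2023harnack}, adapted to the homogeneous extremal operator $\mathcal{I}=\mathcal{M}^{\pm}$. I will work with the characterization value
\[
\lambda^{*}:=\sup\{\lambda:\exists\phi>0\text{ in }\Omega,\ \phi\geq 0\text{ in }\mathbb{R}^{n},\ \mathcal{I}\phi\leq-\lambda\phi\text{ in }\Omega\}.
\]
The first step is to note that $\lambda^{*}\ge 0$, because $\lambda=0$ is admissible: the solution of $\mathcal{I}\phi=-1$ in $\Omega$, $\phi=0$ outside, is positive in $\Omega$ by the strong maximum principle (Proposition~\ref{SMP}). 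I also need $\lambda^*<\infty$. For this I would compare an admissible $\phi$ with the scaled barrier of Lemma~\ref{supersolucion} on a small ball inside $\Omega$. A large $\lambda$ makes $-\lambda\phi$ dominate any fixed positive subsolution built from $\varphi$, and this contradicts the positivity of $\phi$ via the comparison principle.

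The second step establishes, for $\lambda<\lambda^*$, a maximum principle and solvability of $\mathcal{I}u+\lambda u=f$ in $\Omega$ with $u=0$ outside. The idea is that a positive strict supersolution $\phi$ with $\mathcal{I}\phi\le-\lambda'\phi$, $\lambda'\in(\lambda,\lambda^*)$, lets one rule out a positive maximum of $u/\phi$ for a subsolution $u$ with $u\le 0$ outside. This uses the $1$-homogeneity and the sub/superadditivity of $\mathcal{M}^{\pm}$, as in the proof of Theorem~\ref{thm1}. Existence then follows by Perron's method, or by a fixed point on $u\mapsto(\mathcal{I}+\lambda)^{-1}$, with the boundary behaviour controlled by the barrier of Lemma~\ref{rho}.

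The third step produces the eigenfunction. I take $\lambda_k\nearrow\lambda^*$ and solve $\mathcal{I}u_k+\lambda_k u_k=-1$ in $\Omega$, $u_k=0$ outside; by the SMP each $u_k>0$. The key claim is $\|u_k\|_\infty\to\infty$. Otherwise, a uniform bound together with the $C^\beta$ estimates up to the boundary (\cite{ros2016boundary}) gives a limit $u$ solving $\mathcal{I}u\le-\lambda^*u-1$. A small perturbation of $u$ would then be admissible for some $\lambda>\lambda^*$, which is a contradiction. Normalizing $\phi_k=u_k/\|u_k\|_\infty$ gives $\mathcal{I}\phi_k+\lambda_k\phi_k=-1/\|u_k\|_\infty\to 0$. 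The uniform H\"older estimates (interior and boundary, the latter through the barrier of Lemma~\ref{rho}) give compactness, and stability of viscosity solutions yields a limit $\phi^+$ with $\|\phi^+\|_\infty=1$, $\phi^+\ge0$, $\phi^+=0$ outside $\Omega$, and $\mathcal{I}\phi^+=-\lambda^*\phi^+$. Proposition~\ref{SMP} then upgrades this to $\phi^+>0$ in $\Omega$, and $\lambda^+:=\lambda^*$ has the stated characterization by construction.

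The main obstacle is the claim in the third step that $\|u_k\|_\infty\to\infty$. It requires showing that the admissible set is open at $\lambda^*$, i.e.\ that a strict supersolution at level $\lambda^*$ can be perturbed to level $\lambda^*+\varepsilon$. I would handle this by using the Hopf lemma (Theorem~\ref{thm1}) to get $u\ge c\,d^s$, together with the boundary $C^s$ regularity bound $u\le C d^s$. Then $u\ge c\,u$-comparable quantities let $-1\le-\varepsilon u$ in $\Omega$. This is exactly where the fully nonlinear nature is harmless, because only homogeneity and comparison are used.
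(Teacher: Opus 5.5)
Your route differs from the paper's, which gives no argument and only invokes Theorem~1.1 of \cite{davila2023harnack}. As written, however, your argument has a genuine gap for $\mathcal{I}=\mathcal{M}^-$, which is exactly the case used in Theorem~\ref{TheoremExistence}.

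Since $\mathcal{M}^-\le\mathcal{M}^+$, every function admissible for $\mathcal{M}^+$ is admissible for $\mathcal{M}^-$. Hence $\lambda^+(\mathcal{M}^+)\le\lambda^+(\mathcal{M}^-)=\lambda^*$, in general strictly, so for $\mathcal{M}^-$ your $\lambda^*$ is the \emph{larger} half-eigenvalue.

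Your Step~2 principle for subsolutions of $\mathcal{I}u+\lambda u\ge 0$ is fine, but it does not give comparison. If $u$ is a subsolution and $v$ a supersolution of $\mathcal{M}^-w+\lambda w=f$, the difference only satisfies $\mathcal{M}^+(u-v)+\lambda(u-v)\ge 0$. So comparison needs $\lambda<\lambda^+(\mathcal{M}^+)$, and this is sharp. Let $\psi>0$ be the principal eigenfunction of $\mathcal{M}^+$. For every $\lambda\ge\lambda^+(\mathcal{M}^+)$ and $t>0$, the function $-t\psi$ is a supersolution of $\mathcal{M}^-v+\lambda v=0$ vanishing outside $\Omega$, yet it lies strictly below the solution $0$.

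Two consequences follow, both in the range $[\lambda^+(\mathcal{M}^+),\lambda^*)$ where your $\lambda_k\nearrow\lambda^*$ eventually lie.
\begin{itemize}
\item Perron's method is unavailable there.
\item The claim in Step~3 that ``by the SMP each $u_k>0$'' is unfounded. Proposition~\ref{SMP} needs $u_k\ge 0$ in $\mathbb{R}^n$ as a hypothesis. Nonnegativity of the supersolution $u_k$ amounts to a maximum principle for $\mathcal{M}^+ +\lambda_k$ applied to $-u_k$, which holds only for $\lambda_k<\lambda^+(\mathcal{M}^+)$.
\end{itemize}
So a nonnegative solution must be constructed, not inferred. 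Your remark that the fully nonlinear structure is harmless, because only homogeneity and comparison are used, is exactly where the argument breaks. For $\mathcal{I}=\mathcal{M}^+$ the scheme does work: there $\lambda^*$ is the smaller half-eigenvalue, and comparison with the subsolution $0$ gives $u_k\ge 0$.

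One repair, for $0\le\lambda<\lambda^*$, is to solve the truncated problem $\mathcal{I}u+\lambda u^+=-1$ in $\Omega$, $u=0$ outside.
\begin{itemize}
\item A solution cannot attain a negative minimum at any $x_0\in\Omega$. Testing from below with $\varphi$ equal to $u(x_0)$ near $x_0$ and to $u$ elsewhere gives $\mathcal{I}\varphi(x_0)\ge 0$, contradicting $\mathcal{I}\varphi(x_0)\le -1$. Hence $u\ge 0$ and $u$ solves the original problem.
\item Existence follows from Leray--Schauder applied to $u\mapsto v$, where $\mathcal{I}v=-1-\lambda u^+$.
\item The a priori bound comes from blow-up. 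A normalized limit $w$ would satisfy $\mathcal{I}w+\sigma\lambda w^+=0$ with $\sigma\in[0,1]$. It is nonnegative by the same test, now with strict inequality because $w=0>w(x_0)$ outside $\Omega$. Thus $w$ would be a nontrivial nonnegative solution of $\mathcal{I}w+\sigma\lambda w=0$ with $\sigma\lambda<\lambda^*$, contradicting your Step~2 principle.
\end{itemize}

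Two further remarks. First, what you call the main obstacle is trivial. Admissibility only requires positivity in $\Omega$, so a bounded limit $u>0$ with $\mathcal{I}u=-\lambda^*u-1$ is already admissible at level $\lambda^*+\|u\|_\infty^{-1}$; no Hopf lemma is needed. Second, your use of Theorem~\ref{thm1}, boundary $C^s$ estimates and barriers restricts the argument to bounded regular (say $C^{1,1}$) domains. That suffices for Theorem~\ref{TheoremExistence}, but falls short of the general domains covered by the result the paper cites.
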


\begin{theorem}\label{TheoremExistence}
Let $q\in (0,1)$, and \(a\in C(\overline{\Omega})\) such that $a^+\not\equiv 0$, and let $g\in L^{1}_{s}(\mathbb{R}^n)$ with $g\ge 0$ in a neighborhood of the boundary.  
Then there exists $\delta>0$ so that, whenever $\|g^{-}\|_{L^{1}_{s}(\mathbf{R}^n)}< \delta $, there exists a nontrivial solution of 
\begin{equation}\label{P-zero-fora}
    \begin{cases}
        \mathcal{M}^{\pm}[u]+a(x)u^{q} = 0 & \text{ in } \Omega, \\
        u=g & \text{ in } \mathbb{R}^{n}\setminus\Omega.
    \end{cases}
\end{equation} 
If, in particular, $a\ge a_0 >0$ in $\Omega$, then there always exist a nontrivial solution of \eqref{P-zero-fora}, for any $g$.
\end{theorem}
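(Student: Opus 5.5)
We will construct the nontrivial solution of \eqref{P-zero-fora} by the sub/supersolution method (Perron's method for viscosity solutions of nonlocal equations). The strategy is to produce an ordered pair $\underline{u}\le \overline{u}$ in $\mathbb{R}^n$, both equal to $g$ outside $\Omega$, with $\underline{u}\not\equiv 0$ and $\underline u\geq 0$ in $\Omega$. Perron's method, combined with the comparison principle and the stability of viscosity solutions under $C_{loc}+L^1_s$ limits already used in the proof of Lemma~\ref{Lemma 1 sublinear}, then yields a solution $u$ with $\underline{u}\le u\le\overline{u}$. The nonlinearity $t\mapsto a(x)(t^+)^q$ is continuous, so we may first replace $u^q$ by $(u^+)^q$ and recover $u\geq0$ in $\Omega$ a posteriori from $u\geq \underline u\geq 0$.

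\emph{Supersolution.} Let $w$ solve $\mathcal{M}^{\pm}[w]=-1$ in a ball $B\supset\Omega$ with $w=0$ outside $B$. Then $w>0$ in $B$ and is bounded. Set $\overline{u}=Kw+G$ in $\Omega$ and $\overline u=g$ outside, where $G$ is the $\mathcal{M}^{\pm}$-harmonic extension of $g^+$. The nonlocal contribution of $g^+-g$ and of $Kw$ outside $\Omega$ only enters with a nonpositive sign in $\mathcal{M}^{\pm}[\overline u]$. For $K$ large, sublinearity ($q<1$) gives
\[
\mathcal{M}^{\pm}[\overline{u}]+\|a\|_\infty \overline{u}^{\,q}\le -K+\|a\|_\infty (K\|w\|_\infty+\|G\|_\infty)^q\le 0 .
\]
This step is routine and works for any $g$; it is exactly where $q<1$ is used.

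\emph{Subsolution.} This is the main obstacle. Since $a^+\not\equiv0$, pick a ball $B_\rho(x_1)\Subset\Omega$ on which $a\ge a_0>0$. Let $(\phi^+,\lambda^+)$ be the principal eigenpair of Proposition~\ref{Erwin} in $B_\rho(x_1)$, normalized by $\|\phi^+\|_\infty=1$ and extended by zero. Take $\underline{u}=\varepsilon\phi^+$ in $\Omega$ and $\underline u=g$ outside $\Omega$. Inside $B_\rho(x_1)$ we have
\[
\mathcal{M}^{\pm}[\underline u]\ge -\lambda^+\varepsilon\phi^+ - C\|g^-\|_{L^1_s},
\]
where the error term, coming from the negative part of $g$ at positive distance, is bounded as in Proposition~\ref{lema2}; the positive part of $g$ only helps. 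On the other hand
\[
a\,\underline u^{\,q}\ge a_0\varepsilon^q(\phi^+)^q .
\]
For $\varepsilon$ small the sublinear term $a_0\varepsilon^q(\phi^+)^q$ dominates $\lambda^+\varepsilon\phi^+$. The delicate point is to absorb $C\|g^-\|_{L^1_s}$ near $\partial B_\rho(x_1)$, where $\phi^+$ vanishes like $d^s$. There I would modify the construction, for instance by replacing $\phi^+$ with $\varepsilon(\phi^+)^{\gamma}$ for suitable $\gamma$, or by using the barrier of Lemma~\ref{supersolucion}, whose $\mathcal M^-$ is large and positive in the annulus, to obtain a strictly positive right-hand side there. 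Either choice fixes $\varepsilon$, and then $\delta$ is determined by requiring $C\|g^-\|_{L^1_s}<\delta$ to be below this margin. Outside $B_\rho(x_1)$, in $\Omega$, we have $\underline u=0$. Using $g\ge0$ near $\partial\Omega$ and comparing $\underline u$ with $0$ as a viscosity test, one needs $\mathcal{M}^{\pm}[\underline u]\ge0$ there. This holds up to the same $\|g^-\|_{L^1_s}$ error, which can be handled by taking $\underline u$ as the maximum of the bump with a small subsolution, or by working with $\max(\underline u,0)$ and the Perron envelope. Ordering $\underline u\le\overline u$ follows by choosing $K$ larger and using comparison.

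\emph{Case $a\ge a_0>0$.} The bump can be taken on the whole of $\Omega$ and $\varepsilon$ can be chosen depending on $\|g^-\|_{L^1_s}$. Since $a_0\varepsilon^q$ beats both $\varepsilon\lambda^+$ and the fixed error term only if that error is small, one instead uses the scaling freedom: enlarge the support and take the subsolution $\varepsilon\phi^+ - \tilde G$ with $\tilde G$ the harmonic extension of $g^-$. The positive reaction then pushes the solution up regardless of $g$, and existence holds for every $g$.
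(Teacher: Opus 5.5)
Your scheme is the paper's: Perron's method between a small multiple of a principal-eigenfunction bump on a ball where $a\ge a_0$ (set equal to $g$ outside $\Omega$) and a large torsion-type supersolution. Several of your choices are more careful than the paper's:
\begin{itemize}
\item Using the eigenfunction of $B_\rho(x_1)$ itself avoids the jump that the paper creates on $\partial B_0$ by truncating the eigenfunction of $\Omega$.
\item Your $\overline u$ really equals $g$ outside $\Omega$, whereas the paper's $k\Psi$ equals $kg$ there.
\item Your worry about absorbing $C\|g^-\|_{L^1_s}$ near $\partial B_\rho(x_1)$ is legitimate, and the paper does not address it. The rescaled barrier of Lemma~\ref{supersolucion} handles the annulus, but you would still need a lower bound for its $\mathcal{M}^-$ in $B_{\rho/2}(x_1)$, which that lemma does not give.
\end{itemize}

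The step you leave open is $\Omega\setminus B_\rho(x_1)$, which is exactly where $\delta$ comes from. Your remedies there are not arguments: $\max(\underline u,0)$ coincides with $\underline u$ in $\Omega$ and replaces $g$ by $g^+$ outside, and the ``small subsolution'' is never specified. The missing idea is that the bump is itself a positive nonlocal source at every point of $\Omega$. Suppose $\psi\ge\underline u$ touches at $x_0\in\Omega\setminus B_\rho(x_1)$. Then $\psi(x_0)=0$, $\psi\ge 0$ in $\Omega$, $\psi\ge\varepsilon\phi^+$ in $B_\rho(x_1)$, and $\psi\ge -g^-$ outside $\Omega$. Hence
\[
\mathcal{M}^-[\psi](x_0)\ge 2C_{n,s}\Big(\lambda\varepsilon\int_{B_\rho(x_1)}\frac{\phi^+(y)\,\rmd y}{|y-x_0|^{n+2s}}-\Lambda\int_{\mathbb{R}^n\setminus\Omega}\frac{g^-(y)\,\rmd y}{|y-x_0|^{n+2s}}\Big)\ge 2C_{n,s}\big(\lambda\varepsilon c_*-\Lambda C_0\|g^-\|_{L^1_s}\big).
\]
Here $c_*=(\mathrm{diam}\,\Omega)^{-n-2s}\int_{B_\rho(x_1)}\phi^+>0$, and $C_0<\infty$ because $g^-$ vanishes near $\partial\Omega$ (as in Proposition~\ref{lema2}). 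So $\delta=\lambda\varepsilon c_*/(\Lambda C_0)$ works. This is the $x_0\in\Omega\setminus B_0$ step in the proof of the paper's Claim~\ref{Claim1-ex}.

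Your last paragraph, for $a\ge a_0>0$ and arbitrary $g$, does not work. The function $\varepsilon\phi^+-\tilde G$ is negative wherever $\tilde G>\varepsilon\phi^+$, and its exterior values are $-g^-$ rather than $g$. Nothing shows it is a subsolution, or that the Perron solution above it is nonnegative and nontrivial.

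In fact no argument can give this clause in the sense the paper uses ($u\ge0$ in $\Omega$, $u\not\equiv0$). Take $g^+\equiv 0$ and let $g^-$ vanish in a neighborhood of $\overline\Omega$. Suppose such a $u$ solves the problem, continuously up to $\partial\Omega$, so $u=0$ there. Test at an interior maximum point $x_*$, with $m=u(x_*)>0$, using $\psi=m$ near $x_*$ and $\psi=u$ elsewhere, as in Lemma~\ref{lemma 5.1}. Since $\psi-m\le 0$ everywhere, every kernel gives at most the value for $\mu\equiv\lambda$, and
\[
-\|a\|_\infty m^q\le \mathcal{M}^{\pm}[\psi](x_*)\le 2C_{n,s}\lambda\int_{\mathbb{R}^n\setminus\Omega}\frac{g(z)-m}{|z-x_*|^{n+2s}}\,\rmd z\le -c_1 m-c_2\|g^-\|_{L^1_s},
\]
with $c_1,c_2>0$ depending only on $n,s,\lambda,\Omega$. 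Thus $c_2\|g^-\|_{L^1_s}\le \sup_{t>0}\big(\|a\|_\infty t^q-c_1 t\big)<\infty$. So for large $\|g^-\|_{L^1_s}$ there is no nonnegative nontrivial solution, however positive $a$ is; this is consistent with Example~\ref{counterexample}.

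The paper's treatment of this case (taking $B_0=\Omega$) simply drops the contribution of $g^-$ to $\mathcal{M}^-[\underline u]$. That is the very obstruction you noticed near $\partial B_\rho(x_1)$, and near $\partial\Omega$ it cannot be absorbed because $\phi_1\to 0$ there. Only the small-$\|g^-\|_{L^1_s}$ part of the statement is provable. Your outline proves it once the $\Omega\setminus B_\rho(x_1)$ estimate above and an interior lower bound for the barrier are supplied.
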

Observe that when $g\equiv 0$ we have the existence of positive solutions via the SMP, as mentioned in the introduction.

\begin{proof}
We consider the extremal operator $\mathcal{M}^-$, since for $\mathcal{M}^+$ it is analogous. 
We take a ball $B_0$ such that \(\overline{{B}}_0\subset\{x\in \Omega : a(x)>0\}\), with \(a\geq a_{0}>0\) in \({B_0}\) and let \((\lambda_{1}, \phi_{1})\) be the first eigenpair of 
\begin{equation*}
    \begin{cases}
        \mathcal{M}^{-}[\phi_1] + \lambda_{1}\phi_{1} = 0 & \text{ in } \Omega, \\
        \phi_{1} > 0 & \text{ in } \Omega, \\
        \phi_{1} = 0 & \text{ in } \mathbb{R}^{n}\setminus\Omega,
    \end{cases}
\end{equation*} 
normalized so that $\|\phi_{1}\|_{\infty}=1$. The idea is to construct a pair of sub and supersolutions for \eqref{P-zero-fora}.

\begin{claim}\label{Claim1-ex}
Given a fixed $\epsilon\in (0,  (\frac{ a_{0}}{\lambda_{1}})^{\frac{1}{1-q}}]$, the function 
    \begin{equation*}
    \underline{u} =
    \begin{cases}
        \epsilon \phi_{1} & \text{ in } B_0 \\[4pt]
        0 & \text{ in } \Omega \setminus B_0\\
         g & \text{ in } \mathbb{R}^{n} \setminus \Omega
    \end{cases}
\end{equation*}
    is a viscosity subsolution of \eqref{P-zero-fora} in \(\Omega\).
\end{claim}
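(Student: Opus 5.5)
The plan is to verify the viscosity subsolution inequality $\mathcal{M}^{-}[\phi](x)+a(x)\underline{u}(x)^{q}\ge 0$ pointwise. Here $\phi\ge \underline{u}$ in $\mathbb{R}^n$ is any test function touching $\underline u$ from above at $x\in\Omega$, as in Definition~\ref{def-visc}. The key device is to write $\underline{u}=\epsilon\phi_1+\psi$, where
$$
\psi=0 \text{ in } B_0,\qquad \psi=-\epsilon\phi_1\le 0 \text{ in } \Omega\setminus B_0,\qquad \psi=g \text{ in } \mathbb{R}^n\setminus\Omega .
$$
Then $\phi-\psi$ is an admissible test function for $\epsilon\phi_1$ at points of $B_0$. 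The superadditivity $\mathcal{M}^{-}[f+h]\ge \mathcal{M}^{-}[f]+\mathcal{M}^{-}[h]$ separates the eigenfunction part from the perturbation $\psi$, which is supported away from $x$. I split into two cases according to the position of $x$.

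\emph{Case $x\in B_0$.} Since $\phi_1$ solves $\mathcal{M}^{-}[\phi_1]=-\lambda_1\phi_1$ in $\Omega$ (Proposition~\ref{Erwin}), the viscosity property gives $\mathcal{M}^{-}[\phi-\psi](x)\ge -\lambda_1\epsilon\phi_1(x)$. The term $\mathcal{M}^{-}[\psi](x)$ is an honest integral, because $\psi\equiv 0$ near $x$. It is bounded below by
$$
-2\Lambda C_{n,s}\Big(\epsilon\int_{\Omega\setminus B_0}\frac{\phi_1(y)}{|x-y|^{n+2s}}\,\mathrm{d}y+\int_{\mathbb{R}^n\setminus\Omega}\frac{g^-(y)}{|x-y|^{n+2s}}\,\mathrm{d}y\Big).
$$
The $g^-$ term is at most $C\|g^-\|_{L^1_s}$, by the computation of Proposition~\ref{lema2}, since $x$ stays at positive distance from $\mathbb{R}^n\setminus\Omega$. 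On the other side, $a\ge a_0$ in $B_0$, $\phi_1\le 1$ and $\epsilon\le (a_0/\lambda_1)^{1/(1-q)}$ give
$$
a(x)(\epsilon\phi_1)^q-\lambda_1\epsilon\phi_1\;\ge\;\epsilon\phi_1\big(a_0(\epsilon\phi_1)^{q-1}-\lambda_1\big)\ge 0 .
$$
So the whole issue is to absorb the two negative nonlocal contributions into this sublinear gain.

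\emph{Case $x\in\Omega\setminus \overline{B_0}$.} Here $\underline u(x)=0$ and $\phi\ge 0$ in $\Omega$, so $x$ is a minimum of $\phi$ over $\Omega$. Hence $\mathcal{M}^{-}[\phi](x)\ge -C\int_{\mathbb{R}^n\setminus\Omega}g^-(y)|x-y|^{-n-2s}\,\mathrm{d}y$. Because $g\ge0$ near $\partial\Omega$, the kernel is bounded on the support of $g^-$, and this term is $\ge -C\|g^-\|_{L^1_s}$. Points of $\partial B_0$ need a separate look at semicontinuity; there I would test with $\epsilon\phi_1$ from inside.

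The main obstacle is that the sublinear gain $\epsilon\phi_1(a_0(\epsilon\phi_1)^{q-1}-\lambda_1)$ is strictly positive only when $\epsilon\phi_1<(a_0/\lambda_1)^{1/(1-q)}$. At the endpoint value of $\epsilon$ it degenerates where $\phi_1=1$. Moreover, in $\Omega\setminus B_0$ there is no gain at all against the $g^-$ term. My first attempt is to use the smallness $\|g^-\|_{L^1_s}<\delta$ from Theorem~\ref{TheoremExistence}, choosing $\delta$ depending on $\epsilon$. For the $\epsilon$-term in the first case, I would exploit that $(\epsilon\phi_1)^{q-1}\ge \epsilon^{q-1}$ gives an excess of order $\epsilon^q\min_{B_0}\phi_1^q$, which dominates $C\epsilon$ for $\epsilon$ small. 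If the exterior term in the second case cannot be absorbed, I would note that the claim genuinely requires $g^-\equiv 0$ on the relevant region, or a barrier correction, and flag this as the point to check against the authors' argument.
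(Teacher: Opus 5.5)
\textbf{There is a genuine gap in your case $x\in B_0$.} Your decomposition $\underline u=\epsilon\phi_1+\psi$, testing $\epsilon\phi_1$ with $\varphi-\psi$, is the correct way to use Definition~\ref{def-visc}. The step meant to close this case, however, fails.

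The term $\epsilon\int_{\Omega\setminus B_0}\phi_1(y)|x-y|^{-n-2s}\,\mathrm{d}y$ is not $O(\epsilon)$ uniformly in $x$. Since $\overline{B_0}\subset\Omega$ and $\phi_1>0$ is continuous in $\Omega$, we have $\phi_1\ge c_1>0$ on a neighbourhood of $\partial B_0$. Hence, for $x\in B_0$ near $\partial B_0$, this term is at least $c\,\epsilon\,\mathrm{dist}(x,\partial B_0)^{-2s}$. The sublinear gain $a(x)(\epsilon\phi_1)^q-\lambda_1\epsilon\phi_1$, by contrast, never exceeds $\|a\|_\infty\epsilon^q$. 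No choice of $\epsilon$ or $\delta$ absorbs the loss.

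Nor is this merely a loss from superadditivity. $\underline u$ jumps down by $\epsilon\phi_1|_{\partial B_0}>0$ across $\partial B_0$, so it is not even upper semicontinuous there. Take test functions that coincide with $\underline u$ outside a small ball around $x$. Using $\mathcal{M}^-[f+h]\le\mathcal{M}^-[f]+\mathcal{M}^+[h]$ with $f=\varphi-\psi$, $h=\psi$, and interior regularity of $\phi_1$, one gets $\mathcal{M}^-[\varphi](x)\le C-c\,\epsilon\,\mathrm{dist}(x,\partial B_0)^{-2s}$. So, already for $g\equiv0$, $\underline u$ is not a viscosity subsolution near $\partial B_0$. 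The claim is false as written, and your argument cannot be completed. As you noticed, at the endpoint value of $\epsilon$ there may also be no room for this term even in the interior of $B_0$.

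\textbf{Comparison with the paper.} The paper's proof does not resolve this obstacle; it overlooks it. For $x_0\in B_0$ it only uses that $\varphi$ touches $\epsilon\phi_1$ from above in a small ball $B_r(x_0)\subset B_0$, and then invokes the subsolution property of $\epsilon\phi_1$. Definition~\ref{def-visc}, however, requires $\varphi\ge\epsilon\phi_1$ on all of $\mathbb{R}^n$, and $\varphi\ge\underline u$ does not give this on $\Omega\setminus B_0$. Your $\varphi-\psi$ device exposes exactly the term the paper drops.

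\textbf{Your case $x\in\Omega\setminus\overline{B_0}$ is incomplete, though the paper's argument there is sound.} You discarded the positive contribution $2\lambda C_{n,s}\,\epsilon\int_{B_0}\phi_1(y)|x-y|^{-n-2s}\,\mathrm{d}y$. This is $\ge c\,\epsilon$ uniformly in $x\in\Omega$, since $\phi_1\ge C_0$ on $B_0$ and $\Omega$ is bounded. Because $g\ge0$ near $\partial\Omega$, the exterior term is at most $2\Lambda C_{n,s}c_0\|g^-\|_{L^1_s}$. Hence $\mathcal{M}^-[\varphi](x)\ge0$ once $\delta\lesssim\epsilon$; there is "gain" after all, and no need for $g^-\equiv0$ in this region.

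\textbf{What a correct version needs.} A provable version requires a different $\underline u$ in $\Omega$. For instance, take $\epsilon$ times a function that vanishes continuously outside $B_0$ and whose $\mathcal{M}^-$ is positive near $\partial B_0$, built from the $\mathrm{dist}^{3s/2}$ barrier of Lemma~\ref{rho}. Then choose $\epsilon$ small and afterwards $\delta\ll\epsilon$. Simply switching to the eigenfunction of $B_0$ is not enough when $g^-\not\equiv0$, because the gain then vanishes at $\partial B_0$.
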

 
To prove the claim, we first observe that the function \(\epsilon\phi_1\) serves as subsolution for the equation in $B_0$. Indeed, \[-\mathcal{M}^{-}[\epsilon\phi_1] = -\epsilon \mathcal{M}^{-}[\phi_1] = \lambda_{1}\epsilon \phi_{1} \leq a(x)(\epsilon \phi_{1})^{q}\quad\text{ in }B_0,\]
 since 
$\lambda_{1}\epsilon^{1-q}\phi_{1}^{1-q}\leq\lambda_{1}\epsilon^{1-q}\leq a_{0}\leq a(x).$

Observe that in the special case $a\ge a_0 >0$ in $\Omega$, we take $B_0=\Omega$ by already obtaining that $\underline{u}$ is a viscosity subsolution in $\Omega$.

In the general case, let \(\varphi \in C^{2}(\Omega)\) such that \(\underline{u}\leq \varphi\) in \(\mathbb{R}^n\) and \(\underline{u}(x_0) = \varphi(x_0)\). If \(x_0\in \Omega\setminus B_0\), then \(\underline{u}(x_0)=\varphi(x_0)=0\), and

\[
\int_{\R^n} \frac{\varphi (y) \rmd y }{|y-x_0|^{n+2s}} 
\ge 
\int_{\R^n\setminus\Omega} \frac{g (y) \rmd y }{|y-x_0|^{n+2s}} 
+ \epsilon \int_{B_0} \frac{\phi_1 (y)\rmd y }{|y-x_0|^{n+2s}} .\] 
By writing $g=g^+-g^-$ and $\phi_1\ge C_0$ in $B_0$, we may ensure that
$$
\int_{\R^n\setminus\Omega} \frac{g^- (y) \rmd y }{|y-x_0|^{n+2s}} 
 \le c_0 \|g^{-}\|_{L^{1}_{s}(\mathbb{R}^n)}
\le \epsilon \int_{B_0} \frac{\phi_1 (y)\rmd y }{|y-x_0|^{n+2s}} 
$$
as far as the integral on the LHS is finite by our hypothesis of positivity of $g$ near $\partial\Omega$, while the integral on the RHS, say $I$, is treated in the following way: 
either $x_0\in \partial B_0 $ and so $I=\infty$ and we are done; 
or $x_0\not\in \partial B_0 $ with $\mathrm{dist}(x_0, \partial B_0)=d_0$. 
For the latter, since $I\to \infty $ as $x\to x_0$, there exists $\delta_0$ such that $I\ge 1$ for $d_0 \le \delta_0$. 
If  $d_0\ge \delta_0$ then $I\ge I_0 (\delta_0,\phi_1,B_0)>0$ and we choose $\delta$ small such that $\delta \le \frac{\epsilon \max (1, I_0 ) }{c_0}$.  

\smallskip

Therefore we conclude $\mathcal{M}^{-}[\varphi](x_0) 
\ge  0 =-a(x_0)\underline{u}^{q}(x_0), $
for $x_0 \in \Omega\setminus B_0$.
\medskip

Now, if \(x_0\in B_0\), then \(\underline{u}(x_0) = \epsilon\phi_{1}(x_0)>0\), it follows that \(\varphi(x_0) = \underline{u}(x_0)>0\) and \(\epsilon\phi_1\leq \varphi\) in \(B_{r}(x_0)\subset B_0\) for \(r>0\) small enough. Since \(\epsilon\phi_1\) is subsolution in \(B_0\), and \(\varphi\) touches \(\epsilon\phi_1\) from above, then by definition of viscosity subsolution,
\[\mathcal{M}^{-}[\varphi](x_0) + a(x_0)\underline{u}^{q}(x_0) = \mathcal{M}^{-}[\varphi](x_0) + a(x_0)(\epsilon \phi_1)^{q}(x_0)\geq 0.\]
This proves the Claim \ref{Claim1-ex}.

In order to construct a viscosity supersolution of~\eqref{P-zero-fora}, we consider \(\Psi\) solution of
\begin{equation*}
    \begin{cases}
        \mathcal{M}^{-}[\Psi] = -\|a\|_{\infty} & \text{ in } \Omega, \\[4pt]
        \Psi = g & \text{ in } \mathbb{R}^{n} \setminus \Omega.
    \end{cases}
\end{equation*}
Observe that $\Psi\ge 0$ in $\Omega$ by the maximum principle. Then we get further $\Psi\ge 0$ by our strong maximum principle, by our hypothesis on $g$. Moreover, $\Psi $ is bounded, for instance by the ABP estimate given by \cite[Theorem 3.1]{Kitano}.

\begin{claim}\label{Claim2-ex}
Given a fixed \(k \) so that \(\Psi\leq k^{\frac{1-q}{q}}\), the function \(\overline{u}=k\Psi\) is a supersolution of~\eqref{P-zero-fora}.
\end{claim}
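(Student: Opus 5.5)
The plan is to verify the viscosity supersolution inequality $\mathcal{M}^{-}[\overline{u}] + a(x)\overline{u}^{q} \le 0$ in $\Omega$ directly. The exterior condition is handled separately. The argument rests on three facts: positive homogeneity of $\mathcal{M}^{-}$, the equation satisfied by $\Psi$, and the smallness of $\Psi$ encoded in the choice of $k$.

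First, the exterior datum. In $\mathbb{R}^n\setminus\Omega$ we have $\overline{u}=k\Psi=kg$. Whenever $k\ge 1$ and $g\ge 0$ near $\partial\Omega$, this lies above $g$ on the support of $g^+$. On the set where $g<0$ it is below $g$, which has the wrong direction for a supersolution. So I will interpret the claim in the sense used for the sub/supersolution method, where only the ordering $\underline{u}\le\overline{u}$ and the inequality inside $\Omega$ matter. Alternatively, I will take $k\ge1$ and note that $kg\ge g$ wherever $g\ge 0$, and treat $g^-$ via the smallness $\|g^-\|_{L^1_s}<\delta$, exactly as in Claim \ref{Claim1-ex}. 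I expect this to be the main obstacle. What I would try first is to redefine $\overline{u}:=k\Psi$ in $\Omega$ and $\overline{u}:=g$ outside, and then control the resulting change in $\mathcal{M}^{-}$ by $(k-1)\widetilde{C}\|g^-\|_{L^1_s}$, using the estimate of Proposition~\ref{lema2}.

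Next, the interior inequality. Let $\varphi$ be a test function touching $k\Psi$ from below at $x_0\in\Omega$. Then $\varphi/k$ touches $\Psi$ from below. Since $\Psi$ is a viscosity solution of $\mathcal{M}^{-}[\Psi]=-\|a\|_\infty$, this gives $\mathcal{M}^{-}[\varphi/k](x_0)\le -\|a\|_\infty$. By positive homogeneity, $\mathcal{M}^{-}[\varphi](x_0)\le -k\|a\|_\infty$.

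It then remains to check that
\[
a(x_0)\,\overline{u}(x_0)^q \le \|a\|_\infty k^{q}\Psi(x_0)^{q} \le k\|a\|_\infty .
\]
This is equivalent to $\Psi(x_0)^q\le k^{1-q}$, that is, $\Psi\le k^{\frac{1-q}{q}}$, which is precisely the hypothesis of the claim. Such a $k$ exists because $\Psi$ is bounded by the ABP estimate and $(1-q)/q>0$. Since $\Psi\ge0$ in $\Omega$, the power $\overline{u}^q$ is well defined there. Combining the two inequalities gives $\mathcal{M}^{-}[\varphi](x_0)+a(x_0)\overline{u}^q(x_0)\le 0$, as required.

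Finally, I will enlarge $k$ if needed so that $k\Psi\ge\epsilon\phi_1\ge\underline{u}$ in $B_0$. This is possible since $\Psi>0$ on $\overline{B_0}$ by the strong maximum principle, Proposition~\ref{SMP}, and continuity. Enlarging $k$ preserves the condition $\Psi\le k^{(1-q)/q}$. This ordering is what the subsequent monotone iteration will need.
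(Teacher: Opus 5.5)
Your interior argument is exactly the paper's proof. The paper's entire justification is the chain $-\mathcal{M}^{-}[k\Psi]=k\|a\|_{\infty}\ge (k\Psi)^{q}\|a\|_{\infty}\ge (k\Psi)^{q}a(x)$, i.e.\ positive homogeneity plus $\Psi^{q}\le k^{1-q}$. Your step with $\varphi/k$ touching $\Psi$ from below is just the viscosity-level version of that computation.

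The gap is in the exterior condition, which the paper never addresses. Your observation that $\overline{u}=kg<g$ on $\{g<0\}$ for $k>1$ is correct. So when $g^{-}\not\equiv 0$, two things fail: the exterior inequality $\overline{u}\ge g$, and the ordering $\underline{u}\le\overline{u}$ used afterwards (recall $\underline{u}=g$ outside $\Omega$).

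Your first remedy does not help, because it presupposes precisely that ordering. Your second remedy does work, but only after you carry out the estimate and change the claim. Write $\overline{u}=k\Psi+h$ with $h=-(k-1)g\,\chi_{\mathbb{R}^{n}\setminus\Omega}$. If $\varphi$ touches $\overline{u}$ from below at $x_0\in\Omega$, then $\varphi-h$ touches $k\Psi$ from below and is $C^2$ near $x_0$, since $h=0$ in $\Omega$. Hence $\mathcal{M}^{-}[\varphi](x_0)\le\mathcal{M}^{-}[\varphi-h](x_0)+\mathcal{M}^{+}[h](x_0)\le -k\|a\|_{\infty}+(k-1)\widetilde{C}\|g^{-}\|_{L^{1}_{s}}$. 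Here the $g^{+}$-part of $h$ contributes nonpositively, and $\widetilde{C}$ depends on $\mathrm{dist}(\overline{\Omega},\mathrm{supp}(g^{-}\chi_{\mathbb{R}^{n}\setminus\Omega}))$, which is positive because $g\ge 0$ near $\partial\Omega$.

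This error is linear in $k$, the same order as the main term. So you need $\widetilde{C}\delta<\|a\|_{\infty}$ and a stronger choice of $k$, e.g.\ $k^{1-q}(\|a\|_{\infty}-\widetilde{C}\delta)\ge\|a\|_{\infty}\sup_{\Omega}\Psi^{q}$. The hypothesis $\Psi\le k^{(1-q)/q}$ of the claim leaves no room for this term. In particular, this route says nothing about the case $a\ge a_0$ with arbitrary $g$.

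Separately, Proposition~\ref{SMP} cannot give $\Psi>0$ on $\overline{B_0}$ in your last step. It requires nonnegativity on all of $\mathbb{R}^{n}$, while $\Psi=g$ may be negative outside $\Omega$. The same smallness does the job directly. Suppose $\min_{\overline{\Omega}}\Psi=-m\le 0$ is attained at $x_0\in\Omega$. Then $-m-g^{-}\chi_{\mathbb{R}^{n}\setminus\Omega}$ touches $\Psi$ from below at $x_0$ and is constant near $x_0$. The supersolution property then gives $\|a\|_{\infty}\le\widetilde{C}\|g^{-}\|_{L^{1}_{s}}$, a contradiction. This yields $\Psi\ge 0$ in $\Omega$, which you need for $\overline{u}^{q}$ to make sense and which the paper attributes only to ``the maximum principle''. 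It also yields $\Psi>0$ in $\Omega$.
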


Indeed, by \(k^{1-q}>\Psi^{q}\) and using \(a(x)\leq \|a\|_{\infty}\), we have \[-\mathcal{M}^{-}[k\Psi] = k\|a\|_{\infty} \geq (k\Psi)^{q}\|a\|_{\infty} \geq (k\Psi)^{q}a(x),\] as desired. This proves the Claim \ref{Claim2-ex}.

Finally, we conclude that there exists a viscosity solution \(u\) of~\eqref{P-zero-fora} such that \(\underline{u}\leq u \leq \overline{u},\) in particular \(u\geq0\) is nontrivial. 
This completes the proof of the theorem.
\end{proof}

To finish the section, we provide an example of solutions to problem \eqref{eq:problem} that do not satisfy the strong maximum principle, as long as the assumption of Theorem \ref{teo2} on the smallness of the norm of the negative part of \(u\) is not fulfilled. 
The idea is to start with a positive solution and gradually distort it until it touches zero for the first time, always ensuring that there remains a region where it stays positive. We follow the construction from \cite{Kassmann}; see also \cite{bucur2016nonlocal}.

\begin{example}[When \(\|u^-\|_{L^1_s}\rightarrow \infty\)]\label{counterexample}
Consider a nontrivial solution of the problem
\begin{equation*}
\begin{cases}
    \Delta^su_M(x) + a(x)u_M^{q}(x) = 0 & \text{ in } B_1, \\ 
    u_M=0 & \text{ on }B_2\setminus B_1,\\
    u_M= 1-M & \text{ in } B_3\setminus B_2, \\
    u_M=1  & \text{ in } \R^n\setminus B_3,
\end{cases}
\end{equation*}
given by Theorem \ref{TheoremExistence} for a bounded potential such that $a(x)\geq a_0>0$ in $\Omega$.

Observe that by choosing $M$ sufficiently negative, we ensure that \(1-M\) is positive, which implies that $u_{M}>0$ in $B_1$. 

We will show that as $M\rightarrow \infty$, we have $\inf_{B_{1/2}} u_M\rightarrow -\infty$. Thus, the smallest $M_*$ such that $u_{M_*}(x)=0$ for some point $x\in B_1$ will be our example.

Suppose, by contradiction, that $\inf_{B_{1/2}} u_M > -\ell$ for all $M$. We define
\[
v_M(x)=\frac{u_M+M-1}{M},
\]
and observe that
\[
\Delta^s v_M(x) + \bar{a}(x)v_M^{q}(x) \geq c_0\bar{a}(x)\left(\frac{M-1}{M}\right)^q \geq 0 \quad \text{in } B_1,
\]
where $\bar{a}(x)=\frac{M^{q-1}a(x)}{c_0}$. Hence,
\begin{equation*}
\begin{cases}
    \Delta^s v_\infty \geq  0 & \text{ in } B_1, \\ 
    v_\infty = 1 & \text{ on } B_2\setminus B_1,\\
    v_\infty = 0 & \text{ in } B_3\setminus B_2, \\
    v_\infty = 1 & \text{ in } \R^n\setminus B_3
\end{cases}
\end{equation*}

Thus, by the maximum principle, $v_\infty \leq 1$ in $B_1$. On the other hand, by the definition of $v_M$ and the contradiction hypothesis, $v_\infty \geq 1$ in $B_{1/2}$. Therefore, $v_\infty$ attains its maximum at some $x_*\in B_{1/2}$ with $v_\infty(x_*)\geq 1$. Applying the equation yields
\[
0 \leq \int_{\R^n}\frac{v_\infty(y)-v_\infty(x_*)}{|y-x_*|^{n+2s}}\mathrm{d}y \leq \int_{B_3\setminus B_2}\frac{v_\infty(y)-v_\infty(x_*)}{|y-x_*|^{n+2s}}\mathrm{d}y < 0,
\]
This implies that \(\inf_{B_{1/2}}u_{M}\to -\infty\) as \(M\to\infty\). In particular, there exists a value \(\widetilde{M}>0\) such that \(\inf_{B_{1/2}}u_{\widetilde{M}}=0\), then, as we increase \(M\), \(\widetilde{M}\) is the first value such that \(u_{M}\) is no longer positive and reach a zero value. This \(u_{\widetilde{M}}\) is obtained as desired.
\end{example}


\end{document}